\newcommand{\N}{\mathbb{N}}
\newcommand{\Z}{\mathbb{Z}}
\newcommand{\R}{\mathbb{R}}
\theoremstyle{plain}
\newtheorem{theorem}{Theorem}
\newtheorem{lemma}{Lemma}[section]
\newtheorem{proposition}[lemma]{Proposition}
\newtheorem{remark}[lemma]{Remark}
\newtheorem{definition}[lemma]{Definition}
\theoremstyle{definition}
\newtheorem{example}[lemma]{Example}
\title{A phase transition for a spatial host-parasite model with extreme host immunities on $\mathbb{Z}^d$ and $\mathbb{T}_d$.}
\author{\textsc{Sascha Franck}\footnote{University of G\"ottingen, Institute for Math. Stochastics, Goldschmidtstr.~7, 37077 G\"ottingen, Germany, \texttt{sascha.franck@uni-goettingen.de}}}
\begin{document}
\maketitle
\begin{abstract}
  We investigate a model of a parasite population invading spatially distributed immobile hosts on a graph, which is a modification of the frog model. Each host has an unbreakable immunity against infection with a certain probability $1-p$ and parasites move as simple symmetric random walks attempting to infect any host they encounter and subsequently reproduce themselves. We show that, on $\Z^d$ with $d\ge 2$ and the $d$-regular tree $\mathbb{T}_d$ with $d\ge 3$, the survival probability of parasites exhibits a phase transition at a critical value of $p_c\in(0,1)$. Also, we show that the value of $p_c$ is, in general, not a monotonic function of the underlying graph. Finally, we show that on quasi-vertex-transitive graphs, with probability $1$, a fixed vertex is only visited finitely often by a parasite under mild assumptions on the offspring distribution of parasites.
\end{abstract}
{\footnotesize\textit{2020 Mathematics Subject Classification --} 60K35; 60J80; 92D25; 92D30.
	\\
\textit{Keywords --} spatial host-parasite model; frog model; percolation; phase transition. \normalsize

\section{Introduction}
In this work, we investigate the spread of parasites in a spatially distributed host population on different graphs $G= (V,E)$. The main focus will be on the integer lattice $\Z^d$ for $d\ge 2$ and the $d$-regular tree $\mathbb{T}_d$ for $d\ge 3$. By abuse of notation, we denote by $\mathbf{0}$ a distinguished vertex in any graph $G$, which only for $\Z^d$ will actually be the origin. In this model, each vertex $v\in V$ is inhabited by a host, and we assume that initially the host at $\mathbf{0}$ is infected, and a random number of parasites, distributed as some random variable $A$, is placed at $\mathbf{0}$. Parasites move on $G$ according to symmetric simple random walks in discrete time and, when they jump onto a host, attempt to infect this host and reproduce themselves. As hosts often have an immune response against infections, we assume that a host with probability $1-p\in(0,1]$ is completely immune (immune for short) to infection; that is, whenever a parasite tries to infect such a host, parasite reproduction is prevented and the parasite is killed. If the host is not immune, it is called susceptible. In this case, an attacking parasite kills the host, the attacking parasite reproduces (and dies afterwards), and sets free a random number of offspring, distributed as $A$ and independent of everything else. For simplicity we assume that hosts do not reproduce.
We note here that the infecting parasite also dies at a successful infection, and we allow for $0$ offspring to be produced at an infection.

In this paper we investigate the survival probability of the parasite population and show that there is a phase transition for a positive survival probability in the parameter $p$ on both $\Z^d$ and $\mathbb{T}_d$. Also, we investigate recurrence to the origin and show that this does not happen on any graph if the offspring distribution has a finite mean. We will call our model the Spatial Infection Model with host Immunity, or SIMI for shorthand.
\cite{PF2025} make a more thorough analysis of a similar model on $\Z$, which we also call SIMI, where hosts can lose their immunity after getting attacked a random number of times.
Our model generalises the frog model that was introduced by \cite{T1999}. The frog model is a classical branching system on some graph $G=(V,E)$, which involves two types of particles that are usually called \textit{active frogs} and \textit{sleeping frogs}. In the frog model, initially, there are sleeping frogs on the vertices of $G$ and one active frog on some distinguished site $\mathbf{0}\in V$. Sleeping frogs do not move, and active frogs move as simple symmetric random walks on $G$, waking up all sleeping frogs that they encounter and turning them into active frogs as well. The active frogs correspond to parasites in our model, and the sleeping frogs on a vertex $v\in V$ correspond to the offspring that are produced in our model after the host at $v$ gets infected. The frog model coincides with the SIMI for the case that $p=1$ and that there is almost surely at least one offspring produced after a successful infection.

It was shown by \cite{A2001} that the frog model on $\Z^d$ satisfies a shape theorem for the set $S_d(n)$ of vertices visited by some active frog up to time $n$. They showed that, if an i.i.d.~amount of sleeping frogs is placed on each vertex, there is a convex deterministic set $\mathcal{A}_d$, such that for any $\varepsilon\in(0,1)$ we have
\[
(1-\varepsilon)\mathcal{A}_d\subset \frac{S_d(n)}{n}\subset(1+\varepsilon)\mathcal{A}_d ~\text{ almost surely for all }n \text{ large enough.}
\]
We will use this result to show our main result, Theorem \ref{thm:crit_nontriv}. The proof relies on coupling with a supercritical site percolation and using that, due to the shape theorem, if all hosts in a large region are susceptible, then with high probability an infection starting in that region will reach any vertex in a suitable subregion after a linear amount of steps. Making the region big enough and then $p$ close to $1$ will allow us to conclude that the coupled site percolation is supercritical and show the positive survival probability.

Next to the trivial survival of the frogs in the classical model, another main difference from a mathematical standpoint is the following. In the classical frog model, using a collection of independent simple symmetric random walks to assign each frog its trajectory after waking up yields an intuitive way to couple initial configurations in a monotone way. However, doing this in our model will not be a monotone coupling if we allow for $0$ offspring to be produced after a successful infection. This happens because when and where a specific parasite dies depends on the location of hosts that are still alive and will produce $0$ offspring. A concrete example of this phenomenon can be found in Example \ref{ex:notmon}.

\cite{A2002a} introduced and studied a different kind of death mechanic for the frog model. In their model, each time a frog takes a step, it dies with probability $1-\widetilde{p}\in [0,1)$. In this setting the death mechanic only depends on $\widetilde{p}$, in contrast to our model where it also depends on $\mathbb{P}(A=0)$. Hence, it is directly clear that the survival probability of frogs is monotone in the parameter $\widetilde{p}$.

In another work \cite{PF2025} we investigate a generalisation of the model in the current paper on the graph $\Z$, in which hosts can lose their immunity and get infected after being attacked a random number of times. Under certain moment assumptions on the amount of times a host needs to be attacked before infection, we show that the spread of the parasite population is occurring at linear speed. A similar model as in \cite{PF2025} with hosts that lose their immunity after a random number of attacks but without the killing of parasites was also introduced by \cite{J2023} and studied on infinite $d$-ary trees.
\section{Main results}\label{sec:mainres}
In this section we present the main results of this work. We will establish a phase transition for the probability of survival of the parasite population in the parameter $p$. Also, we provide an example to show that the critical parameter is not a monotonic function of the underlying graph. Finally, we investigate recurrence to the origin and show that under some mild assumptions on the offspring distribution, recurrence cannot occur on any transitive graph.
\subsection{Survival of the parasite population}
In this section we will, for any graph $G$ and offspring distributed as $A$, investigate the value of the critical parameter $p_c(G,A)$, which is defined as
\[
 \inf\left\{p\in(0,1]:\mathbb{P}\left(\begin{matrix}\text{The parasite population in the SIMI on }G\\\text{with offspring distribution }A \text{ and}\\\text{susceptible hosts appearing with probability }p\\\text{ survives forever.}\end{matrix}\right)> 0\right\}.
\]
In Definition \ref{def:crit_par} we will give a formal definition of $p_c(G,A)$ after constructing the process and show in Lemma \ref{Lem:survdom} that $p_c(G,A)$ is a critical parameter, in the sense that for $p>p_c(G,A)$ the parasite population on $G$ survives with positive probability and for $p<p_c(G,A)$ it dies out almost surely.
First, we note that on a finite graph $G=(V,E)$ the critical parameter $p_c(G,A)$ is trivially $0$ for any offspring distribution, because with probability $(\mathbb{P}(A\ge 1)p)^{\vert V\vert}>0$ there is no immune host at all, and every infection produces at least one offspring. Hence, in the following, we always assume that $G$ is an infinite graph.

Our first result states that on any (infinite) graph this critical parameter is positive for any $A$ that has finite expectation.
    \begin{theorem}\label{thm:trivpos}
    For any infinite graph $G$ we have $p_c(G,A) \ge \min\left\{1,\frac{1}{\mathbb{E}[A]}\right\}$,  where $\frac{1}{\infty} := 0$.
    \end{theorem}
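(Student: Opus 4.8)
The plan is to prove the bound at the level of survival: I will show that whenever $p\,\mathbb{E}[A] < 1$ the parasite population dies out almost surely, so that no such $p$ can belong to the set defining $p_c(G,A)$. The bound is vacuous when $\mathbb{E}[A]=\infty$, since then $\min\{1,1/\mathbb{E}[A]\}=0$, so I assume $\mathbb{E}[A]<\infty$ throughout. The heart of the argument is to dominate the genealogy of the parasites by a Galton--Watson branching process whose mean offspring number is at most $m:=p\,\mathbb{E}[A]$, which is subcritical precisely when $p<1/\mathbb{E}[A]$.

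To set this up I would explore the family tree of parasites by a sequential, lazy revelation of the immunity environment, processing parasites one at a time in, say, generation (breadth-first) order. A single parasite performs its simple random walk and, along its trajectory, passes through every vertex whose host has already been infected (hence is dead), and is stopped at the first vertex it meets whose host is still alive, i.e.\ either immune or susceptible-but-not-yet-infected. If that vertex was already examined and found immune, or is newly examined and turns out immune, the parasite is killed with no offspring; if it is newly examined and turns out susceptible, the parasite infects it and produces a fresh copy of $A$ offspring. The only randomness needed to decide reproduction is the susceptibility of a not-yet-examined vertex, which is an independent $\mathrm{Bernoulli}(p)$ variable, independent both of the past of the exploration and of the offspring count $A$. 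Consequently the number of children of any parasite equals $A\cdot\mathbf{1}[\text{its first live vertex is a freshly examined susceptible host}]$, and conditionally on the history processed so far its expectation is at most $p\,\mathbb{E}[A]=m$.

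Given the conditional bound $\mathbb{E}[\text{children}\mid\text{history}]\le m$, standard branching-process estimates finish the proof. Writing $Z_k$ for the number of parasites in the $k$-th generation (with $Z_0\sim A$ the initial cloud), the sequential bound summed over a generation gives $\mathbb{E}[Z_{k+1}\mid\mathcal{F}_k]\le m\,Z_k$, hence $\mathbb{E}[Z_k]\le m^k\,\mathbb{E}[A]$ and
\[
\mathbb{E}\Big[\textstyle\sum_{k\ge 0}Z_k\Big]\le \mathbb{E}[A]\sum_{k\ge0}m^k=\frac{\mathbb{E}[A]}{1-m}<\infty
\]
whenever $m<1$. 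Thus the total number of parasites ever produced is almost surely finite, from which I would deduce that the population becomes extinct, using that once only finitely many hosts have been infected a random walk a.s.\ leaves the finite infected region, so every surviving parasite is eventually stopped. Therefore no $p<1/\mathbb{E}[A]$ lies in the survival set, so $p_c(G,A)\ge 1/\mathbb{E}[A]$; combined with $p_c(G,A)\le 1$ this yields $p_c(G,A)\ge\min\{1,1/\mathbb{E}[A]\}$.

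The main obstacle I expect is making the domination rigorous in the presence of the dependence created by the shared, static immunity field and by the simultaneous motion of many parasites: a priori the fate of one parasite is entangled with where and when the others have already killed hosts. The sequential lazy-revelation scheme is precisely what neutralises this, since it guarantees that every reproduction decision consumes a single fresh $\mathrm{Bernoulli}(p)$ coin independent of all previously revealed information, regardless of the intricate order in which hosts are actually infected in real time. The secondary point requiring care is the passage from finiteness of the family tree to genuine extinction, i.e.\ ruling out a lone parasite that survives forever by wandering among the finitely many dead vertices without ever meeting a live host, which has probability zero because a simple random walk on an infinite connected graph leaves every finite set in finite time.
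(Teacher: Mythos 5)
Your proposal is correct and follows essentially the same route as the paper: both dominate the parasite genealogy by a Bienaym\'e--Galton--Watson process whose offspring law is $A$ with probability $p$ and $0$ otherwise (mean $p\,\mathbb{E}[A]<1$), using that a reproduction event requires hitting a not-yet-examined host, which is susceptible with a fresh independent probability $p$, and that parasites on an infinite graph a.s.\ leave any finite infected region and thus meet a live host. Your sequential lazy-revelation scheme just makes explicit the independence that the paper invokes more briefly.
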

    \begin{proof}
Let $p < \min\left\{1, \frac{1}{\mathbb{E}[A]}\right\}$. On any infinitely large graph, the number of sites visited by a simple symmetric random walk is infinite. Since completely immune hosts remain in the system forever, each parasite will eventually hit a susceptible host or hit a completely immune host, and surviving through parasites that walk through empty space and never meet a host (and thus do not die) is not possible. Say some parasite jumps onto a host at a vertex $v\in V$. Then, if it is the first time that this host at $v$ is attacked by a parasite, with probability $p$ the host is susceptible and the parasite generates offspring distributed as $A$, and with probability $1-p$ the host is completely immune and no offspring are generated. If the host at vertex $v$ was already visited by a different parasite, then the host has to be a completely immune host, or else it would have been killed by that other parasite. Hence, in this case no offspring are generated. In particular, ignoring if it was the first time $v$ was visited, we obtain that the offspring generated by a parasite are stochastically dominated by a variable that with probability $p$ is distributed as $A$ and is $0$ with probability $1-p$. Because a susceptible host can only be used once for reproduction, this means that the limit of the number of living parasites as $n\to\infty$ is stochastically dominated by the limit $\lim_{n\to\infty} \xi_n$,
    where $(\xi_n)_{n\ge0}$ is a Bienaymé-Galton-Watson branching process with offspring distributed as $A$ with probability $p$ and $0$ with probability $1-p$. Since this Bienaymé-Galton-Watson branching process has mean offspring $p\mathbb{E}[A]<1$, the claim follows.
    \end{proof}
    We can compare our model to a site percolation on any graph $G$, which is a classical model in probability theory (see e.g., \cite{G1999} for an introduction). In this model, each vertex on a graph $G$, independently of everything else, is either open with probability $p\in[0,1]$ or closed with probability $1-p$. An important quantity is the critical parameter $\widetilde{p}_c(G)$, which is the supremum over all $p\in[0,1]$, such that with probability $1$ there is no infinite, connected path of open vertices starting at some distinguished vertex $\mathbf{0}$. A classic result is that on graphs with bounded degree, the critical parameter is always positive, which yields the analogue result for the SIMI.
    \begin{theorem}\label{thm:trivpos2}
For any graph $G$ and any offspring distribution $A$, we have that $p_c(G,A) \ge \widetilde{p}_c(G)$, where $\widetilde{p}_c(G)$ is the critical site percolation parameter on $G$. In particular, if each vertex has a degree bounded by some $\Delta <\infty$, then $p_c(G,A) \ge 1/(\Delta-1)$ and thus $p_c(\Z,A) = 1$.
\end{theorem}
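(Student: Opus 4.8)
The plan is to couple the SIMI directly with the site percolation that records host immunities. I would realise the model on a common probability space by assigning to each vertex $v\in V$ an independent mark $\omega_v\in\{0,1\}$ with $\mathbb{P}(\omega_v=1)=p$, declaring $v$ \emph{open} (susceptible) if $\omega_v=1$ and \emph{closed} (immune) otherwise, and driving the motion of the parasites by an independent family of simple symmetric random walks. Up to setting $\omega_{\mathbf 0}=1$ (the host at $\mathbf 0$ is infected at time $0$ anyway), the field $(\omega_v)_{v\in V}$ is exactly a site percolation of parameter $p$ on $G$, and I would denote by $C_{\mathbf 0}$ the open cluster of $\mathbf 0$.

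The heart of the argument is the deterministic claim that, for every realisation, the random set $I\subseteq V$ of hosts that are ever infected satisfies $I\subseteq C_{\mathbf 0}$. I would prove this by induction on the order in which infections occur. Every infected host is by definition susceptible, hence open, so it suffices to show that each newly infected vertex $v\neq\mathbf 0$ is adjacent to an already infected vertex. The key structural observation about the dynamics is that a single parasite infects at most one host during its lifetime and dies immediately afterwards; consequently, between its birth at a just-killed host and the moment it infects $v$, it can only have stepped on vertices whose hosts were \emph{already dead}, i.e.\ previously infected, since stepping onto any living host would have killed or stopped it first. Thus the vertex $w$ occupied by the parasite one step before reaching $v$ is a previously infected neighbour of $v$, which gives the induction step and shows that $I$ is a connected set of open vertices containing $\mathbf 0$.

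Next I would argue that survival of the parasite population forces $\lvert I\rvert=\infty$. If only finitely many hosts are ever infected, then (as $A<\infty$ almost surely and each infection is fed by finitely many earlier infections) only finitely many parasites are ever created and the set $D$ of dead hosts is finite. Since a simple symmetric random walk on an infinite graph almost surely leaves every finite set, each of the finitely many parasites almost surely steps onto a living host and is killed, so the population goes extinct almost surely. Contrapositively, on the event of survival we have $\lvert I\rvert=\infty$, hence $\lvert C_{\mathbf 0}\rvert=\infty$. Therefore
\[
\mathbb{P}(\text{survival})\le \mathbb{P}\bigl(\lvert C_{\mathbf 0}\rvert=\infty\bigr),
\]
and for $p<\widetilde{p}_c(G)$ the right-hand side vanishes by definition of the critical percolation parameter, which yields $p_c(G,A)\ge\widetilde{p}_c(G)$.

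For the final assertions I would invoke the standard lower bound on the site-percolation threshold for bounded-degree graphs. If every vertex has degree at most $\Delta$, the number of self-avoiding paths of length $n$ emanating from $\mathbf 0$ is at most $\Delta(\Delta-1)^{n-1}$, so the expected number of such paths consisting only of open vertices is at most $\Delta(\Delta-1)^{n-1}p^n$. For $p<1/(\Delta-1)$ this bound tends to $0$, so by a first-moment argument $\mathbf 0$ almost surely lies in a finite open cluster, giving $\widetilde{p}_c(G)\ge 1/(\Delta-1)$ and hence $p_c(G,A)\ge 1/(\Delta-1)$. Specialising to $G=\Z$, where $\Delta=2$, this reads $p_c(\Z,A)\ge 1$; combined with the trivial bound $p_c\le 1$ it gives $p_c(\Z,A)=1$. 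I expect the main obstacle to be making the deterministic containment $I\subseteq C_{\mathbf 0}$ fully rigorous — in particular justifying that a parasite traverses only already-dead hosts before its unique, life-ending infection — since once this geometric fact is in place the percolation estimates are entirely classical.
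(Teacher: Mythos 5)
Your proposal is correct and follows essentially the same route as the paper: identify susceptible hosts with open sites of a Bernoulli site percolation, observe that the infection is confined to the (almost surely finite, for $p<\widetilde{p}_c(G)$) open cluster of the origin because its boundary consists of immune hosts that kill any parasite stepping onto them, and conclude extinction since finitely many parasites in a finite region all die almost surely. The only cosmetic difference is that you re-derive the bounded-degree bound $\widetilde{p}_c(G)\ge 1/(\Delta-1)$ by the standard path-counting argument, whereas the paper simply cites it.
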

\begin{proof}
 We declare a site percolation on $G$ by saying a site is open if it is inhabited by a susceptible host. We recall that in a site percolation on $G$ with parameter $p<\widetilde{p}_c(G)$, almost surely there is no infinite path of open vertices.  In particular, this means that for $p<\widetilde{p}_c(G)$, almost surely there is some finite connected (as a subgraph of $G$) set $\mathcal{J}\subset V$ that contains $\mathbf{0}$, such that every $x\in\mathcal{J}\setminus\{\mathbf{0}\}$ is inhabited by a susceptible host and every $x\in V\setminus\mathcal{J}$, for which there is a $y\in \mathcal{J}$ such that $\{x,y\}\in E$, is inhabited by an immune host. Since every neighbour of $\mathcal{J}$ is immune, a parasite dies when it attempts to leave the finite set $\mathcal{J}$. Because only finitely many parasites get generated inside the finite set $\mathcal{J}$, the parasite population dies out in finite time. The second and third claims are a direct consequence of \cite[Theorem 1.33]{G1999}.
\end{proof}
\begin{remark}
 We note that Theorem \ref{thm:trivpos2} in particular makes no assumptions on the distribution of $A$. This distinguishes the SIMI from the similar frog model with death introduced by \cite{A2002a}, for which a positive critical parameter on graphs with bounded degree was only shown under additional (mild) assumptions on the offspring distribution (cf. \cite[Theorem 1.3]{A2002a} and \cite[Theorem 2.1]{L2020}).
\end{remark}

\subsection{Survival on \texorpdfstring{$\Z^d$}{Zd}}
Our main theorems will establish that $p_c(\Z^d,A) < 1$ for all $d\ge 2$ under mild conditions on $A$.
Our main theorem states the following.
  \begin{theorem}\label{thm:crit_nontriv}
    For any $d\ge 2$ and $A$ such that $A\ge 1$ almost surely and $\mathbb{E}[A] > 1$, we have
    \[
    p_c(\Z^d,A) < 1.
    \]
    \end{theorem}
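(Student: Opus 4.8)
The plan is to reduce the survival of the SIMI to the supercriticality of a coarse-grained site percolation on $\Z^d$, and then to force that percolation to be supercritical by taking $p$ close to $1$. The one external input is the shape theorem of \cite{A2001}. At $p=1$ and with $A\ge 1$ almost surely the SIMI is exactly a frog model: each infected host emits $A$ random walkers, and identifying the infecting parasite's continuation with one activated frog turns the dynamics into a frog model with an i.i.d.\ number $A-1$ of sleeping frogs per site. The assumption $\mathbb{E}[A]>1$ is precisely $\mathbb{E}[A-1]>0$, so this frog model is non-degenerate and the set $S_d(n)$ of infected sites obeys $(1-\varepsilon)\mathcal{A}_d\subset S_d(n)/n\subset(1+\varepsilon)\mathcal{A}_d$ almost surely for large $n$, with $\mathcal{A}_d$ convex and containing a Euclidean ball $B(\mathbf{0},r_0)$; equivalently, the infection front advances at strictly positive linear speed in every direction. (When $\mathbb{E}[A]=1$ one has $A\equiv1$, the population is a single random walk, which a.s.\ meets an immune host, so the hypothesis cannot be relaxed.)

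First I would prove a \emph{local spreading lemma}. Fix a large $L$ and a constant $C=C(r_0)$ large enough that, by time $CL$, the front of the above frog model started from a single parasite reaches into all $2d$ blocks adjacent to the one containing its starting point. Consider the box $\tilde B$ of side $\approx 3L$ centred on a block and the process obtained by running the fully seeded frog model inside $\tilde B$ while killing every parasite that leaves $\tilde B$. On the event that all hosts in $\tilde B$ are susceptible this killed process is dominated by the true SIMI (this is where $A\ge1$ is used, so that discarding walkers can only shrink the infected set), and its law depends only on the immunities and walk trajectories inside $\tilde B$. The lemma asserts that, with probability at least $1-\delta(L)$, where $\delta(L)\to0$ as $L\to\infty$, a single parasite entering $\tilde B$ at any boundary site infects the whole central sub-box of side $L$ and reaches the seed regions facing all $2d$ neighbouring blocks within time $CL$.

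Next I would set up the renormalisation. Call a block $z\in\Z^d$ \emph{good} if all hosts in $\tilde B_z$ are susceptible and the killed process in $\tilde B_z$ realises the spread of the local lemma from every boundary entry point. Then
\[
\mathbb{P}(z\text{ good})\ \ge\ (1-\delta(L))\,p^{\lvert\tilde B_z\rvert},
\]
and, being measurable with respect to the immunities and confined trajectories inside $\tilde B_z$, the field $(\mathds{1}\{z\text{ good}\})_{z}$ is $k$-dependent for a fixed finite $k$. I would first fix $L$ so large that $\delta(L)$ is small, and then take $p<1$ so close to $1$ that the right-hand side exceeds the threshold of the Liggett--Schonmann--Stacey domination lemma; this makes $(\mathds{1}\{z\text{ good}\})_z$ dominate a supercritical Bernoulli site percolation on $\Z^d$, where $d\ge2$ enters exactly so that $\widetilde{p}_c(\Z^d)<1$.

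On the positive-probability event that the good blocks contain an infinite cluster through the origin's block, the infection seeds that block and then, by the local spreading lemma applied along the cluster, passes from each good block into every adjacent good block, infecting infinitely many hosts; hence the parasites survive forever with positive probability and $p_c(\Z^d,A)\le p<1$. The main obstacle is the local spreading lemma, and specifically making it \emph{localised}: the asymptotic, almost-sure shape theorem must be upgraded to a quantitative finite-$L$ statement in which the infection of the central box is, with high probability, carried by chains of infections confined to $\tilde B$, so that killing parasites on $\partial\tilde B$ does not impede it. I would obtain this from a restricted first-passage version of the shape theorem together with a union bound over the $O(L^{d-1})$ possible entry sites.
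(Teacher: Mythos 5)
Your overall strategy --- renormalising $\Z^d$ into blocks, declaring a block good when all its hosts are susceptible and the $p=1$ frog model spreads across it, invoking the Liggett--Schonmann--Stacey lemma to dominate a supercritical Bernoulli site percolation, and sending $p\to1$ after fixing the block size --- is exactly the paper's. The gap is in your local spreading lemma, specifically the clause ``from every boundary entry point.'' For the good event at a block to be measurable with respect to randomness attached to $\tilde B_z$ alone (which is what you need for $k$-dependence), the spread inside the block must be carried by parasites \emph{born} inside $\tilde B_z$; the entering parasite's trajectory is indexed by its birth vertex in the neighbouring block, so you must discard it (as you note, $A\ge1$ lets you do this monotonically via the parasite-wise coupling). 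But then an entry site $v$ with $A_v=1$ contributes $A_v-1=0$ fresh walkers, and the localised process started there simply does not spread. Since only $\mathbb{E}[A]>1$ is assumed, $\mathbb{P}(A=1)>0$ is allowed, and the probability that all $\Theta(L^{d-1})$ entry sites have $A\ge2$ is $\mathbb{P}(A\ge2)^{\Theta(L^{d-1})}\to0$, so $\delta(L)\not\to0$ and the lemma as stated is false. Even granting $A\ge2$ almost surely, your union bound over $\Theta(L^{d-1})$ entry sites needs a failure probability for the finite-box spreading event decaying faster than $L^{-(d-1)}$; the cited shape theorem is only an almost-sure asymptotic statement with no rate, and you explicitly defer exactly this point.

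The paper avoids both problems by reversing the quantifier: a block centred at $x$ is good if each of the $2d$ faces of a smaller inner cube $B_\infty^d(x,\frac{N-1}{2})$ contains \emph{some} vertex $j$ with $A_j\ge2$ such that the frog model started from the $A_j-1$ fresh parasites at $j$ covers $B_\infty^d(j,N)$ within $R(N)$ steps --- a cube large enough to contain the entire inner cube, hence all designated seeds, of each neighbouring block, so the infection is passed seed-to-seed rather than through an uncontrolled entry site. The existence of such a $j$ on a given face costs only a factor $1-\mathbb{P}(A=1)^{N^{d-1}}\to1$, the spreading event for a single fixed $j$ has probability $\theta(N)\to1$ by the soft shape theorem alone (no rate needed), and the $2d$ face events, being $2d$-dependent, are handled by one more application of the LSS lemma. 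If you replace your ``every entry point'' clause with this ``one seed per face'' construction, your argument closes and coincides with the paper's.
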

    The proof relies on coupling with a supercritical site percolation and that for $p=1$ the SIMI coincides with the frog model, for which a shape theorem is known. The details will be given in section \ref{subsec:proof_Zd}.

As a corollary we immediately obtain the following result for general $A$.
\begin{theorem}\label{thm:crit_genA_Z}
  Let $d\ge 2$ and $A$ be such that $\mathbb{E}[A] > 1$. Let $\widetilde{A}$ be a random variable that is distributed as $A$ conditioned to be at least $1$, and suppose $\mathbb{P}(A = 0) < 1 - p_c(\Z^d,\widetilde{A})$. Then also $p_c(\Z^d,A) < 1$.
\end{theorem}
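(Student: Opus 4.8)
The plan is to reduce to Theorem~\ref{thm:crit_nontriv} by conditioning the offspring away from $0$ and absorbing the resulting ``$0$-offspring'' infections into the immunity. The bookkeeping comes first. Since $\mathbb{E}[A]>1$ we have $\mathbb{P}(A\ge 1)>0$, so $\widetilde{A}$ is well defined, $\widetilde{A}\ge 1$ almost surely, and $\mathbb{E}[\widetilde{A}]=\mathbb{E}[A]/\mathbb{P}(A\ge 1)\ge \mathbb{E}[A]>1$. Hence $\widetilde{A}$ satisfies the hypotheses of Theorem~\ref{thm:crit_nontriv}, giving $p_c(\Z^d,\widetilde{A})<1$. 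The standing assumption $\mathbb{P}(A=0)<1-p_c(\Z^d,\widetilde{A})$ is exactly equivalent to $p_c(\Z^d,\widetilde{A})<\mathbb{P}(A\ge 1)$, so I can fix a parameter $q$ with $p_c(\Z^d,\widetilde{A})<q<\mathbb{P}(A\ge 1)$ and set $p:=q/\mathbb{P}(A\ge 1)\in(0,1)$. For this $q>p_c(\Z^d,\widetilde{A})$, Lemma~\ref{Lem:survdom} ensures that the SIMI on $\Z^d$ with offspring $\widetilde{A}$ and parameter $q$ survives with positive probability.

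Next I would set up the coupling. In the SIMI with offspring $A$ and parameter $p$, classify each host independently as \emph{immune} (probability $1-p$), a \emph{dud} (susceptible but producing $0$ offspring, probability $p\,\mathbb{P}(A=0)$), or \emph{productive} (susceptible producing a number of offspring distributed as $\widetilde{A}\ge 1$, probability $p\,\mathbb{P}(A\ge 1)=q$). Alongside this I run a modified process on the same host labels and the same family of walk trajectories, in which every dud is turned into a wall, i.e.\ treated exactly like an immune host. Because the productive sites occur with probability $q$ and carry offspring distributed as $\widetilde{A}$, this modified process is precisely the SIMI on $\Z^d$ with offspring $\widetilde{A}$ and parameter $q$, and therefore survives with positive probability by the previous paragraph.

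Finally comes the domination, which is the main obstacle. The only difference between the two processes is that a dud, once attacked, becomes host-free and passable in the original model but remains a permanent wall in the modified one, and passability can only enlarge the region explored by parasites. The target statement is that the set of productive hosts ever infected in the modified process is contained in that of the original process; since survival of either model is equivalent to infinitely many productive infections occurring, this inclusion forces survival of the original model and yields $p_c(\Z^d,A)\le p<1$. The delicate point is precisely the non-monotonicity flagged in the introduction: along a fixed walk trajectory the original parasite passes through every vertex the modified one does, but one must rule out that it dies \emph{earlier}, at an intermediate vertex that is not yet host-free in the original model at the moment of arrival. I would resolve this by an induction over the infections of the modified process, ordered in time, strengthening the inductive hypothesis to assert that the original process infects each productive vertex no later than the modified process does; this timing control guarantees that every intermediate vertex is already host-free when the corresponding original parasite reaches it, closing the induction and establishing the inclusion.
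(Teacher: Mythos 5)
Your proposal is correct and follows essentially the same route as the paper: condition the offspring away from $0$, absorb the zero-offspring hosts into the immunity field, and observe that the resulting process is a SIMI with offspring $\widetilde{A}$ and a reduced susceptibility parameter to which Theorem \ref{thm:crit_nontriv} applies. The only difference is one of emphasis: the paper simply asserts that the walls-for-duds process is ``clearly dominated'' by the original one (this follows from the vertex-wise coupling argument of Lemma \ref{Lem:survdom}, since the modified immunity field is pointwise larger while the offspring variables are shared), whereas you spell out this domination via a timing induction, which does indeed close as you describe.
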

\subsection{Survival on \texorpdfstring{$\mathbb{T}_d$}{Td}}
Similar to $\Z^d$, we obtain a phase transition whenever at least one offspring is generated at an infection.
\begin{theorem}\label{thm:Tdphase}
  Let $A\ge 1$ almost surely and $\mathbb{E}[A] > 1$ and $d\ge 3$. Then $p_c(\mathbb{T}_d,A) < 1$.
\end{theorem}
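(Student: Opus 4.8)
The plan is to prove $p_c(\mathbb{T}_d,A)<1$ for the $d$-regular tree ($d\ge 3$) by the same coupling-to-supercritical-percolation strategy used for $\Z^d$ in Theorem \ref{thm:crit_nontriv}, but exploiting the extra freedom that trees provide rather than a shape theorem. The key structural advantage of $\mathbb{T}_d$ over $\Z^d$ is that on a tree the set of vertices ever visited by the parasite population naturally dominates a Bienaymé--Galton--Watson tree, so one can argue survival directly via a branching/percolation comparison instead of needing the geometric control that a shape theorem gives on the lattice.

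The approach I would take is the following. First I would set $p=1$ so that the SIMI is exactly the frog model on $\mathbb{T}_d$, and recall that the frog model on $\mathbb{T}_d$ for $d\ge 3$ is known to be strongly recurrent / to have positive survival probability when $A\ge 1$ a.s.\ and $\mathbb{E}[A]>1$; equivalently, when a frog wakes up at a vertex it sends out at least one offspring, and each offspring performs an independent simple random walk. The crucial monotonicity observation (valid precisely because $A\ge 1$ almost surely, so that the obstruction described in Example \ref{ex:notmon} does not arise) is that making a fraction of hosts immune is, up to coupling, the same as thinning: each host is susceptible independently with probability $p$, and an offspring that hits an immune host is simply killed. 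I would then build a tree of infections rooted at $\mathbf{0}$: to each susceptible vertex $v$ that is infected we associate its offspring particles and the first-passage structure of their walks, and declare the directed edge from $v$ toward a neighbouring vertex $w$ to be \emph{open} when at least one offspring emitted from $v$ reaches $w$ and $w$ is susceptible.

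The central step is to choose $p$ close enough to $1$ so that this branching-percolation process on $\mathbb{T}_d$ is supercritical. Concretely, I would show that for $p=1$ the expected number of new susceptible vertices that each infected vertex directly infects among its $d-1$ forward neighbours is strictly greater than $1$ (this is where $\mathbb{E}[A]>1$ and $d\ge 3$ enter, giving a branching number strictly above $1$), and then use continuity: the expected forward progeny is a continuous, increasing function of $p$ obtained by thinning each forward infection by the factor $p$ (susceptibility of the target) composed with the hitting probabilities of the random walks. Hence there exists $p<1$ for which the expected forward progeny still exceeds $1$, and standard comparison with a supercritical Galton--Watson process (or equivalently supercritical percolation on $\mathbb{T}_d$, using $\widetilde p_c(\mathbb{T}_d)=1/(d-1)<1$) yields that the infection cluster is infinite with positive probability, i.e.\ the parasite population survives. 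This gives $p_c(\mathbb{T}_d,A)<1$.

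The main obstacle I expect is making the forward-progeny comparison genuinely a branching process despite dependencies: the walks of different offspring are independent, but a single offspring may wander, infect several vertices in different subtrees, and the events ``$w$ is infected'' are not independent across neighbours because they can be driven by the same walker. To handle this I would restrict attention to first-generation infections along disjoint subtrees emanating from $\mathbf{0}$ and use the tree's branching structure to make the contributions from different children conditionally independent, or alternatively only count, at each infected vertex, the offspring that immediately step to a fresh forward neighbour and infect it there (ignoring longer excursions), which underestimates the true progeny but suffices to exceed the critical threshold of $1$ for $p$ near $1$. The positivity of $\widetilde p_c(\mathbb{T}_d)$ being bounded away from $1$ is what guarantees room for this thinning, so the quantitative heart of the argument is verifying that the underestimated forward mean is strictly above $1$ at $p=1$ and then invoking continuity in $p$.
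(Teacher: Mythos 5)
Your high-level plan (compare with a branching process over disjoint subtrees and push $p$ towards $1$) matches the paper's, but the quantitative heart of your argument has a genuine gap: neither of the two comparison processes you actually propose is supercritical. For the nearest-neighbour count, note that a simple random walk on $\mathbb{T}_d$ started at $v$ ever hits a fixed neighbour $w$ with probability exactly $\frac{1}{d-1}$, so a single walker hits on average $(d-1)\cdot\frac{1}{d-1}=1$ of the forward neighbours, and an infection at $v$ releases only $A_v-1$ \emph{fresh} walkers, where $\mathbb{E}[A]>1$ guarantees only $\mathbb{P}(A\ge 2)>0$. Your truncated, independence-friendly version (count only offspring that immediately step to a fresh forward neighbour) therefore has mean at most $\mathbb{E}[A]\frac{d-1}{d}$ even at $p=1$, which need not exceed $1$ (take $\mathbb{E}[A]=1.1$ and $d=3$); so there is nothing above $1$ for your continuity-in-$p$ step to start from. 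The untruncated version does have mean slightly above $1$ at $p=1$ (the continuing parasite contributes exactly $1$ and the fresh walkers a positive amount), but it is not a branching process and not an independent percolation: a single walker's trajectory is shared across the ``openness'' events of many edges, and a mean above $1$ for such a dependent process on a tree does not imply an infinite cluster. Either way the argument does not close.

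The paper resolves both problems at once, and this is the idea your proposal is missing. At each infected vertex $x$ it uses only \emph{one} fresh parasite, chosen (on an event of positive probability, by transience) to remain in the forward subtree $\mathbb{T}_d^+(x)$ for its entire trajectory; its offspring in the comparison Bienaym\'e--Galton--Watson process are not the infected neighbours of $x$ but the whole external boundary $\partial_e\bigl(\mathcal{V}^p_{x,j}(\{x\})\bigr)$ of that parasite's range, minus the site from which it jumped onto the immune host that killed it. These boundary vertices are roots of pairwise disjoint, completely unexplored subtrees, which is what makes the offspring counts genuinely i.i.d.\ across generations. Supercriticality then comes not from continuity at $p=1$ but from divergence: the number of vertices the parasite visits before dying is geometric with parameter $1-p$, so the size of the external boundary of its range tends to infinity as $p\to 1$, and the mean offspring exceeds $1$ for $p$ close enough to $1$. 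Replacing your one-step forward-neighbour count by this boundary-of-the-range count is exactly what is needed to make your sketch into a proof.
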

The proof is similar to the approach of \cite[Theorem 1.5]{A2002a} and relies on coupling with a supercritical Bienaymé-Galton-Watson branching process. The details will be given in section \ref{subsec:proof_Td}.

Using the same argument as on $\Z^d$, we will obtain the following.
\begin{theorem}\label{thm:crit_genA_T}
  Let $d\ge 3$ and $A$ such that $\mathbb{E}[A] > 1$. Let $\widetilde{A}$ be a random variable that is distributed as $A$ conditioned to be at least $1$, and suppose $\mathbb{P}(A = 0) < 1 - p_c(\mathbb{T}_d,\widetilde{A})$. Then also $p_c(\mathbb{T}_d,A) < 1$.
\end{theorem}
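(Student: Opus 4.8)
The plan is to deduce this from Theorem \ref{thm:Tdphase} by a coupling that reduces the general offspring law $A$ to its conditioned version $\widetilde A$, mirroring exactly the deduction of Theorem \ref{thm:crit_genA_Z} on $\mathbb{Z}^d$. Write $q := \mathbb{P}(A=0)$, so $\widetilde A$ has law $\mathbb{P}(\widetilde A = k) = \mathbb{P}(A=k)/(1-q)$ for $k\ge 1$. First I would record that $\widetilde A \ge 1$ almost surely and that
\[
\mathbb{E}[\widetilde A] = \frac{\mathbb{E}[A]}{1-q} \ge \mathbb{E}[A] > 1,
\]
so Theorem \ref{thm:Tdphase} applies and gives $p_c(\mathbb{T}_d,\widetilde A) < 1$. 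In particular the hypothesis $q < 1 - p_c(\mathbb{T}_d,\widetilde A)$, equivalently $\mathbb{P}(A\ge 1) > p_c(\mathbb{T}_d,\widetilde A)$, is meaningful, and since $p\,\mathbb{P}(A\ge 1)\uparrow \mathbb{P}(A\ge 1)$ as $p\uparrow 1$ I may fix $p<1$ with $p' := p\,\mathbb{P}(A\ge 1) > p_c(\mathbb{T}_d,\widetilde A)$. The goal is then to show that SIMI$(\mathbb{T}_d,A)$ with parameter $p$ survives with positive probability, which forces $p_c(\mathbb{T}_d,A)\le p<1$.

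The coupling I would use is the following. Realize SIMI$(A,p)$ by giving each vertex $v$ an independent susceptibility mark (susceptible with probability $p$) and an independent offspring value $A_v\sim A$ used at its first infection. Call $v$ \emph{productive} if it is susceptible and $A_v\ge 1$; then the productivity marks are i.i.d.\ Bernoulli$(p')$ and, conditioned on being productive, $A_v$ is distributed as $\widetilde A$. Let $M$ be the modified process obtained by treating every non-productive vertex (immune, or susceptible with $A_v=0$) as permanently immune, i.e.\ as killing every parasite that reaches it. By construction $M$ has the law of SIMI$(\mathbb{T}_d,\widetilde A)$ with parameter $p'$. The claim I would establish is the domination: on this coupled probability space, survival of $M$ implies survival of SIMI$(A,p)$. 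The only place the two processes differ is at a susceptible-$0$ vertex $w$: in both, the first parasite to reach $w$ dies producing no offspring, but in SIMI$(A,p)$ the host at $w$ is then removed, so later parasites pass through, whereas in $M$ the vertex blocks forever. Thus SIMI$(A,p)$ only ever has extra permeability.

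The hard part will be making this domination rigorous, since the non-monotonicity flagged in Example \ref{ex:notmon} forbids appealing to a generic monotone coupling. I would argue by induction over the infection events of $M$, ordered by their $M$-infection time, using a shared stack of i.i.d.\ walk trajectories for the offspring launched from each vertex together with the shared values $A_v$ on productive vertices, so that a parasite carrying a fixed label follows the same trajectory in both processes. The inductive claim is that every vertex productively infected in $M$ is also productively infected in SIMI$(A,p)$, by the same labelled parasite, and at a weakly earlier time. The key point is to show that whenever such a parasite reaches a vertex $v$ in $M$ without being blocked, it also reaches $v$ in SIMI$(A,p)$: every blocker of its trajectory in SIMI$(A,p)$ is also a blocker in $M$, because immune vertices coincide, a not-yet-infected productive vertex is infected weakly earlier in SIMI$(A,p)$ by the inductive hypothesis, and a susceptible-$0$ vertex blocks in $M$ in any case. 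This is precisely the favorable direction, so the comparison goes through even though the model is not monotone in general.

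Finally, since $p' > p_c(\mathbb{T}_d,\widetilde A)$, Lemma \ref{Lem:survdom} gives that $M$, being SIMI$(\mathbb{T}_d,\widetilde A)$ with parameter $p'$, survives with positive probability; by the domination so does SIMI$(\mathbb{T}_d,A)$ with parameter $p<1$, and hence $p_c(\mathbb{T}_d,A)\le p<1$ by the definition of the critical parameter in Definition \ref{def:crit_par}.
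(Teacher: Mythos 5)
Your proposal is correct and follows essentially the same route as the paper: the paper's proof (by reduction to Theorem \ref{thm:crit_genA_Z}) also reclassifies every vertex with $A_x=0$ as immune, identifies the resulting process as a SIMI with offspring $\widetilde{A}$ and an effective susceptibility parameter tending to $\mathbb{P}(A\ge 1)>p_c(\mathbb{T}_d,\widetilde{A})$ as $p\uparrow 1$, and invokes Theorem \ref{thm:Tdphase}. The only difference is that the paper asserts the domination of the modified process by the original one as "clear," whereas you spell out the inductive coupling argument justifying it.
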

For a large degree $d$, parasites will jump away from the root onto a site still inhabited by a host most of the time, and hence, the survival will approximately be like that of a Bienaymé-Galton-Watson branching process. Precisely, we obtain the following asymptotic.
\begin{theorem}\label{thm:Tdasympt}
  Suppose that $1<\mathbb{E}[A]\le \infty$, then $\lim_{d\to\infty}p_c(\mathbb{T}_d,A) = \frac{1}{\mathbb{E}[A]}$.
\end{theorem}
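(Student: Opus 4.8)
The plan is to prove the two inequalities $\liminf_{d\to\infty} p_c(\mathbb{T}_d,A) \ge 1/\mathbb{E}[A]$ and $\limsup_{d\to\infty} p_c(\mathbb{T}_d,A) \le 1/\mathbb{E}[A]$ separately. The lower bound is immediate: Theorem \ref{thm:trivpos} gives $p_c(\mathbb{T}_d,A) \ge \min\{1, 1/\mathbb{E}[A]\}$ for every $d$, which equals $1/\mathbb{E}[A]$ when $\mathbb{E}[A] < \infty$ (since $\mathbb{E}[A] > 1$) and is the trivial bound $\ge 0$ when $\mathbb{E}[A] = \infty$. So all the work is in the upper bound: I will show that for every $p$ with $p > 1/\mathbb{E}[A]$ (any $p \in (0,1]$ when $\mathbb{E}[A]=\infty$) the parasites survive on $\mathbb{T}_d$ with positive probability once $d$ is large enough, giving $p_c(\mathbb{T}_d,A) \le p$ for all large $d$ and hence $\limsup_{d} p_c(\mathbb{T}_d,A) \le p$; letting $p \downarrow 1/\mathbb{E}[A]$ then finishes.

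For the upper bound I will dominate the parasite population from below by a supercritical Bienaymé–Galton–Watson process built from "downward" first-step infections. First I truncate: fixing $p$ as above, I choose $M < \infty$ with $p\,\mathbb{E}[A^{(M)}] > 1$, where $A^{(M)} := \min(A,M)$; this is possible because $\mathbb{E}[A^{(M)}] \uparrow \mathbb{E}[A]$, and it simultaneously handles $\mathbb{E}[A]=\infty$ and bounds the number of offspring I need to track. When a vertex $v$ at distance $\ge 1$ from the root is infected, I keep $B_v := \min(A_v, M) \le M$ of the offspring born there as \emph{tracked} parasites. Each tracked parasite takes its first random-walk step; on the tree the $d-1$ children of $v$ are all fresh, and I declare the reduced lineage at $v$ to reproduce only on the event $E_v$ that its $B_v$ tracked parasites step to $B_v$ distinct children (none stepping back to the parent). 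On $E_v$ each such distinct fresh child is susceptible independently with probability $p$, and every susceptible one is infected and becomes a new tracked vertex; on $E_v^c$ the reduced lineage at $v$ is killed. Because different infected vertices have disjoint children on $\mathbb{T}_d$ and the walks, susceptibilities and offspring counts are independent, the tracked vertices form a genuine Bienaymé–Galton–Watson tree with offspring law $Y$ not depending on the vertex.

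The key estimate is the mean of $Y$. Conditioning on $B_v = b \le M$ gives $\mathbb{P}(E_v \mid B_v = b) = \prod_{i=1}^{b}\frac{d-i}{d} \ge \prod_{i=1}^{M}\frac{d-i}{d} =: q_d$, with $q_d \to 1$ as $d \to \infty$; since susceptibility is independent of the steps and exactly $b$ distinct children are hit on $E_v$, one has $\mathbb{E}[Y \mid B_v = b] = \mathbb{P}(E_v \mid B_v=b)\,b\,p \ge q_d\,b\,p$, whence $\mathbb{E}[Y] \ge q_d\, p\, \mathbb{E}[A^{(M)}] \to p\,\mathbb{E}[A^{(M)}] > 1$. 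Thus for all large $d$ the tracked process is supercritical and survives with positive probability. To transfer survival to the SIMI, I argue by induction that every vertex infected in the reduced process is eventually infected in the true process: once $v$ is infected its tracked parasites are released, and a tracked parasite stepping onto a fresh susceptible child infects it unless another parasite already did, so in either case that child becomes infected. Since infecting a vertex at distance $k$ needs at least $k$ steps, infinitely many infections force living parasites at all times, so survival of the supercritical tree yields survival of the population on $\mathbb{T}_d$, i.e.\ $p_c(\mathbb{T}_d,A) \le p$.

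I expect the main obstacle to be making this domination rigorous despite the non-monotonicity flagged in the introduction (the $0$-offspring phenomenon of Example \ref{ex:notmon}). The feature that circumvents it is that I never compare two SIMI configurations; I only assert that each tracked first-step infection is a genuine infection in the true process, which is an undercount and therefore safe even when some infected hosts release $0$ offspring and parasites merely pass through emptied sites. The second delicate point is the collision between the several offspring born at one vertex competing for the same children, which is precisely why the truncation at $M$ together with $q_d \to 1$ is needed: it guarantees that for large $d$ all $\le M$ tracked parasites land on distinct fresh children with probability close to $1$, so that almost the full mean $p\,\mathbb{E}[A^{(M)}]$ is retained and the comparison stays supercritical.
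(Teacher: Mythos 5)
Your proposal is correct and follows essentially the same route as the paper: the lower bound from Theorem \ref{thm:trivpos}, and for the upper bound a truncation of $A$ followed by a comparison with a supercritical Bienaym\'e--Galton--Watson process built from first-step ``downward'' infections, made supercritical for large $d$ because backward steps and collisions among the $\le M$ tracked parasites become negligible. The only (immaterial) differences are that the paper removes individual colliding or backward-stepping parasites rather than killing the whole brood on the event $E_v^c$, and truncates via $A\mathds{1}_{A\le s}$ rather than $\min(A,M)$.
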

Again, the proof follows a similar approach to \cite[Theorem 1.7]{A2002a}, relying on coupling with a Bienaymé-Galton-Watson branching process, and will be given in section \ref{subsec:proof_Td}.

\subsection{Non-monotonicity of \texorpdfstring{$p_c$}{pc} as a function of the graph}
With the established phase transitions in certain graphs in Theorem \ref{thm:crit_genA_Z} and Theorem \ref{thm:crit_genA_T}, a natural question is what influence the underlying graph has on this phenomenon. In particular, how increasing a graph by adding additional vertices and edges affects the critical value $p_c$. In case of site percolation it is immediately clear that $\widetilde{p}_c(G_1) \ge \widetilde{p}_c(G_2)$ for any graphs $G_1,G_2$ such that $G_2$ contains $G_1$ as a subgraph, because an infinite path of open vertices in $G_1$ is also an infinite path of open vertices in $G_2$. However, for the frog model with death, it was shown by \cite{F2004} that increasing the graph on which the system evolves can in some cases decrease the critical parameter and in other cases increase it. The reason for this is that the graph needs to be explored by random walks to create new offspring, and thus, attaching large complete graphs to each vertex causes the random walks to get stuck inside these finite subgraphs and die before they can spread out further. A similar approach also applies for the SIMI, and in section \ref{subsec:proof_nonmoncrit} we will show the following result.
\begin{theorem}\label{thm:nonmon_graph}
There exist graphs $G,G_1,G_2$ such that $G$ is a subgraph of both $G_1$ and $G_2$, but the critical parameters for the SIMI with deterministically $3$ offspring at an infection satisfy
 \[
 1 > p_c(G_1,3) > p_c(G,3) > p_c(G_2,3) > 0.
 \]
\end{theorem}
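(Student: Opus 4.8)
The plan is to realise the three graphs by decorating a single base tree in two opposite ways, in the spirit of the frog-model-with-death construction of \cite{F2004}. I take $G=\mathbb{T}_3$, the $3$-regular tree. Since every vertex of $G$ has degree $3$, Theorem~\ref{thm:trivpos2} gives $p_c(G,3)\ge \tfrac{1}{3-1}=\tfrac12$, whereas Theorem~\ref{thm:Tdphase} (applicable since $3\ge3$, $A\equiv 3\ge 1$ and $\mathbb{E}[A]=3>1$) gives $p_c(G,3)<1$; set $p^\ast:=p_c(G,3)\in[\tfrac12,1)$. For $G_2$ I enlarge the branching and take $G_2=\mathbb{T}_m$ for large $m$; then $G$ is a subgraph of $G_2$, by embedding $\mathbb{T}_3$ as a subtree of $\mathbb{T}_m$ that uses $3$ of the $m$ edges at the image of the root and $2$ of the remaining $m-1\ge 2$ edges at the image of every other vertex. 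For $G_1$ I attach a trap at each vertex: to every $v\in V(\mathbb{T}_3)$ I glue $N$ new vertices which, together with $v$, span a clique $K_{N+1}$, keeping all tree edges intact. Then $G$ is a subgraph of $G_1$ as well.

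The decreasing inequality is the soft part. By Theorem~\ref{thm:Tdasympt} we have $\lim_{m\to\infty}p_c(\mathbb{T}_m,3)=\tfrac13$, so for $m$ large enough $p_c(G_2,3)<\tfrac12\le p^\ast=p_c(G,3)$; simultaneously Theorem~\ref{thm:trivpos} gives $p_c(G_2,3)\ge\tfrac13>0$. Fixing such an $m$ already yields $p_c(G,3)>p_c(G_2,3)>0$.

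The increasing inequality $1>p_c(G_1,3)>p_c(G,3)$ is the heart of the matter: the claim is that the attached cliques trap the parasites so strongly that $p_c(G_1,3)\to1$ as $N\to\infty$, so that for $N$ large $p_c(G_1,3)>p^\ast=p_c(G,3)$, while $p_c(G_1,3)<1$ for every finite $N$. The bound $p_c(G_1,3)<1$ is the analogue of Theorem~\ref{thm:Tdphase}: once $p$ is close enough to $1$ (depending on $N$), immune hosts are so rare that the parasites colonise the tree backbone supercritically, so survival has positive probability. For the trapping bound I would fix $p<1$ and dominate the set of infected tree-vertices by a Bienaymé-Galton-Watson process whose offspring is the number of tree-neighbours of a freshly infected vertex $v$ that are ever infected by parasites emanating from $v$ and its clique. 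I would then estimate the mean offspring: a parasite entering the size-$N$ clique of $v$ is absorbed at one of the roughly $(1-p)N$ immune clique-hosts before returning to $v$, unless a rare event of probability of order $1/((1-p)N)$ occurs; since a fully colonised clique spawns at most of order $N$ parasites in total, only of order $1$ of them ever return to $v$. Each parasite present at $v$ in turn reaches a prescribed tree-child only by taking the single edge $v\to\text{child}$ out of its $N+3$ incident edges, i.e.\ with probability of order $1/N$. Multiplying, the expected number of tree-children infected per infected vertex is of order $1/N$, hence below $1$ for $N$ large; the dominating process is then subcritical and the population dies out almost surely, so $p_c(G_1,3)\ge p$ for every $p<1$, i.e.\ $p_c(G_1,3)\to1$.

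I expect this trapping estimate for $G_1$ to be the main obstacle. Rigorously it is a first-passage and absorption analysis for a branching random walk confined to $K_{N+1}$ with killing at a positive density of immune sites and a single escape edge to the tree: one must show, uniformly in the random order in which the clique-hosts get colonised, that the escape probability to $v$ is of order $1/((1-p)N)$ and that the total progeny produced inside the clique is of order $N$, and one must control the feedback of parasites that leave the clique, return to $v$, and re-enter. This is precisely the SIMI analogue of the mechanism by which \cite{F2004} raise $p_c$ by enlarging the graph, with death occurring at immune hosts rather than at every step.
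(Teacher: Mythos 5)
Your construction is exactly the paper's: $G=\mathbb{T}_3$, $G_2=\mathbb{T}_{d_0}$ for large $d_0$ (via Theorems \ref{thm:trivpos2}, \ref{thm:Tdasympt} and \ref{thm:trivpos}), and $G_1$ the tree with a clique $K_{N+1}$ glued to each vertex, with the same reduction of the increasing inequality to a trapping estimate. Two steps, however, are left as programs rather than proofs. First, the claim $p_c(G_1,3)\to 1$ is the entire content of the paper's Proposition \ref{prop:compgraph}; your heuristic (escape probability of order $1/((1-p)N)$ per clique parasite, order $N$ parasites per clique, order $1/N$ chance of exiting along a tree edge) is the right one, and the paper makes it rigorous by computing the exact conditional escape probability $\mathbb{P}(\gamma<\tau\mid L=\ell)=3(n-\ell)/(n((n+4)\ell+3))$ via a geometric-series resummation over repeated returns to $(\mathbf{0},0)$, averaging over $L\sim\mathrm{Bin}(n,1-p)$ with an inverse binomial moment bound, and building the dominating Bienaym\'e--Galton--Watson process carefully (each parasite's death is ignored until it enters a fresh, unexplored clique) so that offspring counts are genuinely i.i.d.\ and the feedback you worry about is sidestepped by construction. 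Second, for $p_c(G_1,3)<1$ you gesture at re-running a survival argument for $p$ near $1$; the paper instead uses a one-line coupling --- declare $x\in\mathbb{T}_3$ susceptible iff every host in the attached clique $V_{n_0}(x)$ is susceptible on $G^{(n_0)}$ --- which yields $p_c(G^{(n_0)},3)\le p_c(\mathbb{T}_3,3)^{1/(n_0+1)}<1$ directly from Theorem \ref{thm:Tdphase}. So the architecture is correct and identical to the paper's, but the quantitative absorption analysis you correctly identify as the main obstacle still has to be carried out.
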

\subsection{Recurrence to the origin}
In this section, for a quasi-vertex-transitive graph $G$, i.e., when the automorphism group Aut$(G)$ has finitely many orbits, we investigate recurrence to the origin. Let
\[
\vartheta(G,p,A) := \mathbb{P}\begin{pmatrix}\text{In the SIMI on }G \text{ with initially only the origin }\mathbf{0}\text{ infected,}\\\text{offspring distributed as }A \text{ and}\\\text{susceptible hosts appearing with probability } p,\\\text{the origin }\mathbf{0}\text{ is visited by a parasite infinitely often.}\end{pmatrix}.
\]
Our main result reads that on quasi-vertex-transitive graphs, this probability is $0$ for any $p<1$ and offspring distribution with finite expectation, i.e., the SIMI is a.s.~transient.
\begin{theorem}\label{thm:norecphase}
  Let $G$ be a quasi-vertex-transitive graph, $p\in[0,1)$, and $A$ be such that $\mathbb{E}[A]<\infty$. Then $\vartheta(G,p,A) = 0$.
\end{theorem}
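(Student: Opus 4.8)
The plan is to prove the quantitatively stronger statement that the \emph{expected} total number of parasite–visits to the origin is finite; since this forces the number of visits to be finite almost surely, it yields $\vartheta(G,p,A)=0$. Write $N$ for the total number of pairs $(\pi,t)$ such that parasite $\pi$ occupies $\mathbf{0}$ at time $t$. I would first pre-sample the immunity environment $\mathcal{I}\subseteq V$, placing each vertex into $\mathcal{I}$ independently with probability $1-p$; this reproduces the model since a host's immunity may be revealed upon its first visit. The basic observation is that before dying a parasite only traverses already-infected (hence susceptible, hence non-immune) vertices and then perishes at the first \emph{fresh} vertex it reaches; in particular it never crosses a vertex of $\mathcal{I}$. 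Consequently, if $(X^\pi_t)_{t\ge 0}$ is the intrinsic simple random walk assigned to $\pi$, its realized trajectory is an initial segment of $X^\pi$ stopped no later than $\tau^\pi_{\mathcal I}:=\inf\{t:X^\pi_t\in\mathcal I\}$, and the number of visits of $\pi$ to $\mathbf{0}$ is at most $W_\pi:=\#\{t<\tau^\pi_{\mathcal I}:X^\pi_t=\mathbf{0}\}$, the visits to $\mathbf{0}$ of that walk killed on the immune set.

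Next I would carry out a first-moment (many-to-one) computation conditional on $\mathcal I$. The intrinsic walk of each parasite is an independent simple random walk, independent of $\mathcal I$ and of the history up to that parasite's birth, whereas its birth vertex $b_\pi$ is measurable with respect to that history; hence $\mathbb{E}[W_\pi\mid\mathcal I,\,b_\pi=v]=g_{\mathcal I}(v,\mathbf{0})$, where $g_{\mathcal I}(v,\mathbf{0})$ is the expected number of visits to $\mathbf{0}$ of a simple random walk started at $v$ and killed on $\mathcal I$. Grouping parasites by birth vertex, using that the number born at $v\neq\mathbf{0}$ equals $A_v\mathbf{1}[v\text{ infected}]$ with $A_v$ an independent copy of $A$, and that an infected vertex is never immune, I obtain
\[
\mathbb{E}[N]\;\le\;\mathbb{E}[A]\,\mathbb{E}\!\left[\sum_{v\notin\mathcal{I}} g_{\mathcal{I}}(v,\mathbf{0})\right].
\]
Because the killed walk is reversible with respect to the degree measure, $\deg(v)\,g_{\mathcal I}(v,\mathbf{0})=\deg(\mathbf{0})\,g_{\mathcal I}(\mathbf{0},v)$, and quasi-transitivity forces only finitely many distinct degrees, so $\sum_{v}g_{\mathcal I}(v,\mathbf{0})\le C\sum_{v}g_{\mathcal I}(\mathbf{0},v)=C\,\mathbb{E}[\tau_{\mathcal I}]$ with $C=\Delta_{\max}/\Delta_{\min}$ and $\tau_{\mathcal I}$ the lifetime of the walk from $\mathbf{0}$. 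It therefore remains to show $\mathbb{E}[\tau_{\mathcal I}]<\infty$.

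For this key estimate I would decompose the lifetime along the successive fresh vertices discovered by the walk. Each freshly visited vertex is immune independently with probability $1-p$, so the index $K$ of the first immune fresh vertex is geometric, $\mathbb{P}(K=k)=p^{k-1}(1-p)$, and is independent of the walk itself. Writing $T_k$ for the first time the range reaches $k$ vertices, we have $\tau_{\mathcal I}=T_K$ and hence $\mathbb{E}[\tau_{\mathcal I}]=(1-p)\sum_{k\ge 1}p^{k-1}\,\mathbb{E}[T_k]$. The increment $T_{k+1}-T_k$ is the time to exit the current range, a connected set of $k$ vertices; on a bounded-degree graph its expected exit time is $O(k^2)$ (merge the exit edges into one boundary vertex and apply the commute-time identity with $R_{\mathrm{eff}}(\,\cdot\,,\partial)\le\mathrm{diam}\le k$ and edge count $O(\Delta k)$). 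Thus $\mathbb{E}[T_k]=O(k^3)$, the geometric series converges, and $\mathbb{E}[\tau_{\mathcal I}]<\infty$, completing the argument.

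I expect the main obstacle to be the bookkeeping that legitimizes the first-moment step: birth locations of parasites are strongly correlated with the environment and with one another, so one must condition on $\mathcal I$ and invoke only the independence of each parasite's intrinsic trajectory from the past given its birth site. The two hypotheses enter exactly here: $\mathbb{E}[A]<\infty$ bounds the expected number of parasites born per site, and $p<1$ is what makes the geometric series converge — as $p\uparrow 1$ the bound diverges, consistent with the origin being visited infinitely often in the frog model at $p=1$. Quasi-transitivity is used only through bounded degree and the finitely-many-degrees reversibility comparison, so the argument applies uniformly to all the graphs considered.
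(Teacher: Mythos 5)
Your argument is sound, but it takes a genuinely different route from the paper's. The paper (following Alves--Machado--Popov) runs a first-moment/Borel--Cantelli argument on the \emph{events} $\{\mathbf{0}\in\mathcal{R}_x^{p}\}$ that some parasite born at $x$ ever reaches the origin: quasi-transitivity supplies graph automorphisms that convert $\mathbb{P}(\mathbf{0}\in\mathcal{R}_x^{p})$ into $C\,\mathbb{P}(x\in\mathcal{R}_\mathbf{0}^{p})$, the sum over $x$ collapses to $C\,\mathbb{E}[\widetilde{A}]\,\mathbb{E}[\vert\mathcal{V}^p_{\mathbf{0},1}(\{\mathbf{0}\})\vert]$, and the only quantitative input is that the number of \emph{distinct} sites a parasite visits before death is Geometric$(1-p)$; the paper also first reduces to $\widetilde{A}=(A\mid A\ge 1)$ so that the parasite-wise construction applies. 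You instead bound the expected number of \emph{time-indexed} visits $\mathbb{E}[N]$, replace the automorphism step by reversibility of the killed Green's function together with the bounded ratio of degrees, and therefore need the stronger estimate $\mathbb{E}[\tau_{\mathcal I}]<\infty$ on the expected lifetime in time steps, which you obtain from the commute-time bound $\mathbb{E}[T_{k+1}-T_k]=O(k^2)$ combined with the same geometric range variable. Your version costs an extra exit-time lemma but buys more: a quantitative conclusion $\mathbb{E}[N]<\infty$ rather than mere almost sure finiteness, and, as you note, it uses quasi-transitivity only through bounded degree, so it proves transience on all bounded-degree graphs. The one step you should write out carefully is the many-to-one identity $\mathbb{E}[W_\pi\mid\mathcal I,\,b_\pi=v]=g_{\mathcal I}(v,\mathbf{0})$: in the parasite-wise construction the activation of label $(v,i)$ is measurable with respect to $\mathcal I$, $\mathbf{A}$ and the walks of \emph{other} labels, hence independent of $Y^{v,i}$ given $\mathcal I$, which is exactly what legitimizes the factorization you describe (and one should also discard the irrelevant immunity of the initially infected site $\mathbf{0}$, whose host is absent).
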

\begin{remark}
  For the case $p=1$, the recurrence of this model on general graphs is still open. It is recurrent on any $\Z^d$ for $d\ge 1$ and any offspring distribution (\cite{P2001}). Also, depending on the offspring distribution, the frog model can be recurrent or transient on $\mathbb{T}_d$ for any $d\ge3$ (cf. \cite{J2016}). The frog model with deterministically $2$ offspring at an infection is recurrent on $\mathbb{T}_3$ and transient on $\mathbb{T}_6$, but the behaviour on $\mathbb{T}_4$ and $\mathbb{T}_5$ is not solved (cf. \cite{J2017}).
\end{remark}
\section{Construction of the Process}
In this section we construct the SIMI on $G$. Although it is intuitively clear that the survival probability should be monotone in the parameter $p$, Example \ref{ex:notmon} shows that we cannot conclude that simply by relying on the monotonicity of the classical frog model construction. For that reason, we consider two ways of constructing the model. The first approach is the classical way to assign each parasite that enters the system a label $(x,i)\in V\times\N$ and sample its entire path $(Y^{x,i}_n)_{n\ge0}$. We note that the path $(Y^{x,i}_n)_{n\ge0}$ will get sampled for all $n\ge0$ but only be used until the time that the parasite dies. The second approach will assign each vertex $x\in V$ sequences $(D^x_n)_{n\in\N}$ of jump directions that a parasite will perform to leave that vertex after jumping onto it. The two constructions satisfy two different important almost sure path properties that will be shown in Lemma \ref{lem:entpath_couple} and Lemma \ref{Lem:survdom}.

A classic tool from percolation theory is to couple the types of hosts for all $p\in[0,1]$ such that increasing $p$ almost surely only yields more susceptible hosts; see e.g., \cite[p. 11]{G1999}. To this end, we assume the following underlying probability space.
\begin{definition}\label{def:prob_space}
Let $G = (V,E)$ be a graph, and assume there is a probability space $\mathbf{\Omega}$ on which are defined the following independent random variables:
\begin{itemize}
    \item An i.i.d.~collection $\mathbf{U} := \{U_x:x\in V\}$ which are uniformly distributed on $(0,1)$ and for $p\in[0,1]$ we define $\mathbf{I}^p := \{I_x^p :x\in V\} := \{1+\infty\cdot\mathds{1}_{U_x > p}:x\in V\}$. A host at $x$ will be susceptible if $I_x^p = 1$ and immune if $I_x^p = \infty$.
    \item A $\N_0$-valued random variable $A$ with $\mathbb{E}[A] < \infty$ and an i.i.d.~collection $\mathbf{A}:=\{A_x:x\in V\}$ distributed as $A$. After a host at $x$ is infected, $A_x$ many parasites will be generated.
    \item An independent collection $\mathbf{Y} := \{(Y_n^{x,i})_{n\ge0}:x\in V,i\in\N\}$ of simple symmetric random walks in discrete time such that $Y_0^{x,i} = x$ for all $x\in V,i\in \N$.
    \item An independent collection $\mathbf{D} := \{D_n^x:x\in V,n\in\N\}$ such that $D_n^x$ is uniformly distributed over $\mathcal{N}_x\subset V$, the neighbourhood of $x$ in $G$, for all $x\in V,n\in\N$.
\end{itemize}
\end{definition}
\begin{remark}
 By definition of the coupling of $\{\mathbf{I}^p:p\in[0,1]\}$, if $0\le p_1\le p_2\le 1$ then $I_x^{p_1} \ge I_x^{p_2}$ for all $x\in V$. Thus, increasing the value of $p$ generates possibly more susceptible hosts in $\mathbf{I}^p$ while keeping all hosts susceptible that were already of that type for the smaller value of $p$.
\end{remark}
Next, we introduce some additional notation used throughout this paper. For a graph $G = (V,E)$, we recall the graph distance $d_G$ as the smallest number of edges in a path between two vertices.
For any finite set $\mathcal{J}\subset V$ we define its outer boundary $\partial\mathcal{J} := \{y\in V\setminus\mathcal{J}\vert\,\exists x\in\mathcal{J}: \{x,y\}\in E\}$ and set $\overline{\mathcal{J}} := \mathcal{J}\cup\partial\mathcal{J}$. Also, we recall the supremum-norm $\lVert x\rVert_\infty := \max\{\vert x_1\vert,\dots,\vert x_d\vert\}$ on $\mathbb{R}^d$ and define for $r\ge 0$ the closed cube $B_\infty^d(x,r) := \{y\in\R^d:\lVert x-y\rVert_\infty\le r\}$. On $\mathbb{T}_d$ we note that for any $x,y\in\mathbb{T}_d$ there is a unique path from $x$ to $y$ and say that $x\ge y$ if $y$ is contained in the unique path from $\mathbf{0}$ to $x$.

The state space of the process on some graph $G= (V,E)$ will be given by tupels
\[
\mathbb{S}:=\left\{(\mathcal{I},\eta)\left\vert\begin{matrix}\mathcal{I}\subset V \text{ finite}, \eta:\overline{\mathcal{I}}\to\N_0\cup\{-1,-\infty\}\text{ with }\\\eta(\mathcal{I})\subset \N_0,\eta(\partial\mathcal{I}) \subset\{-1,-\infty\}\end{matrix} \right.\right\}.
\]
The set $\mathcal{I}$ are the currently infected sites without hosts. $\eta$ gives the amount of parasites on each infected site $x\in\mathcal{I}$ and the immunity of each site $x\in\partial\mathcal{I}$ that can be reached in one jump from an infected site. For any $p\in(0,1]$ and initial configuration $(\mathcal{I},\eta)\in\mathbb{S}$, we will construct the SIMI $(\mathcal{I}^p_t,\eta_t^p)_{n\ge0}$ as a strong Markov process defined on $\mathbf{\Omega}$ and taking values in $\mathbb{S}$, equipped with the discrete topology, as well as prove basic properties of the so-coupled processes for different initial configurations in the upcoming subsections \ref{subsec:entpath} and \ref{subsec:vertjumps}.
\begin{definition}
Let $p\in(0,1]$ and $\mathcal{I}\subset V$ be finite. The random initial configuration $(\mathcal{I},\eta^{\text{reg},p})$ is defined as
  \[
  \eta^{\text{reg},p}(x) =\begin{cases} A_x, &x\in\mathcal{I}\\
   -I^p_x, &x\in\partial\mathcal{I}\end{cases}.
  \]
\end{definition}
\subsection{Parasite-wise path construction}\label{subsec:entpath}
In this subsection we construct the SIMI by assigning each parasite a label and sampling its entire path at once. We call this the \emph{parasite-wise} construction.

Suppose there is some initial configuration $(\mathcal{I},\eta)\in \mathbb{S}$ and a $p\in(0,1]$. To distinguish the two constructions, we will denote the process constructed in this section by $(\widetilde{\mathcal{I}^p_t}(\mathcal{I},\eta),\widetilde{\eta_t^p}(\mathcal{I},\eta))_{n\ge0}$. Also, we will not indicate the underlying graph $G= (V,E)$ on which the process evolves in our notation and always assume that the underlying graph is clear from the context.

Using the variables from Definition \ref{def:prob_space}, the system evolves as follows. We assign each parasite a label $(x,i)\in V\times\N$, where $x$ is the location of the parasite and $i$ enumerates the parasites on the same site $x$. A parasite with label $(x,i)$ moves along the trajectory $(Y^{x,i}_n)_{n\ge0}$ until it visits a vertex $y\in V$ that is not yet infected. If $I_y^p = \infty$, the parasite is killed and the label $(x,i)$ is removed from the system, and we no longer use $(Y^{x,i}_n)_{n\ge0}$. If $I_y^p= 1$ and $A_y = 0$, then the parasite is also killed and the label $(x,i)$ is removed from the system, but the site $y$ is added to the set of infected sites. If $I_y^p= 1$ and $A_y \ge 1$, then the site $y$ is added to the set of infected sites, the label $(x,i)$ remains in the system, and the labels $(y,1),\dots,(y,A_y-1)$ are added to the system as parasites that follow the trajectory $(Y^{y,j}_n)_{n\ge0}$ for $j=1,\dots, A_y-1$.
Also, we assume that there is some deterministic rule to determine the order in which the labels are processed in each time step.

In the following we make the assumption that $A\ge 1$ almost surely. Hence, the second case in the construction does not appear, and labels of parasites only get removed if the parasite visits an immune host. Then, by construction, the $i$-th parasite that is generated at some site $x\in V$ follows the path $(Y^{x,i}_n)_{n\ge0}$ until it visits a vertex $y\in V\setminus\mathcal{I}$ with $I^p_y = \infty$. Hence, it is useful to define for any $(x,i)\in V\times\N$ the lifetime and set of vertices visited by the parasite with label $(x,i)$:
\begin{equation}\label{eq:perc_coupl}
\begin{split}
\tau^p_{x,i}(\mathcal{I}) :&= \inf\{n\ge 0: Y^{x,i}_n \notin\mathcal{I}~\text{ and }~ I^p_{Y^{x,i}_n} = \infty\} \\
\mathcal{V}^p_{x,i}(\mathcal{I}) :&= \{Y^{x,i}_n: 0\le n < \tau^p_{x,i}(\mathcal{I})\}.
\end{split}
\end{equation}
From the construction it is clear that the set
\[\mathcal{I}_\infty^p(\mathcal{I},\eta^{\text{reg},p}) := \bigcup_{n\ge0}\widetilde{\mathcal{I}^p_n}(\mathcal{I},\eta^{\text{reg},p})\] of vertices that eventually get infected when the initial configuration is given by $(\mathcal{I},\eta^{\text{reg},p})$ can be described as exactly those vertices $x$ for which there exists an infection path from the initially infected sites to $x$, i.e., a sequence of vertices connecting $x$ to the initially infected vertices, such that a parasite from each vertex in the sequence reaches (and infects) the next vertex in the sequence before it dies. Precisely, a vertex $x\in V$ is in $\mathcal{I}_\infty^p(\mathcal{I},\eta^{\text{reg},p})$, if and only if $x\in\mathcal{I}$ or $x\in V\setminus\mathcal{I}$ and for some $n\ge 1$ there exist
\begin{equation}\label{eq:infection_sequence}x_0\in\mathcal{I},i_0\in\{1,\dots,A_{x_0}\}\text{ and }x_1,\dots,x_n\in V\setminus\mathcal{I},~i_1,\dots,i_{n-1}\in\N
\end{equation} with $1\le i_{m} \le A_{x_{m}}-1$ for all $m\in\{1,\dots n-1\}$, $x_n = x$ and $x_{k+1}\in\mathcal{V}^p_{x_k,i_k}(\mathcal{I})$ for all $k\in\{0,\dots, n-1\}$.

The following lemma is evident, using this representation of the eventually infected sites.
\begin{lemma}\label{lem:entpath_couple}
Let $0<p<p^\prime \le 1$ and $\mathcal{I}\subset\mathcal{I}^\prime\subset V$ be finite, and suppose that $A\ge 1$ almost surely. Then we have
  \[
  \mathcal{I}^p_\infty(\mathcal{I},\eta^{\text{reg},p}) \subset  \mathcal{I}^{p^\prime}_\infty(\mathcal{I}^\prime,\eta^{\text{reg},p^{\prime}})
  \]
  almost surely. Also, the event that the parasite population with initial configuration $(\mathcal{I},\eta^{\text{reg},p})$ survives for infinite time is given by the event that
  \[
  \{\vert\mathcal{I}^p_\infty(\mathcal{I},\eta^{\text{reg},p})\vert = \infty\}.
  \]
\end{lemma}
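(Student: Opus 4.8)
The plan is to read off both assertions directly from the infection-sequence representation of $\mathcal{I}^p_\infty$ recorded in~\eqref{eq:infection_sequence}, which is available precisely because $A\ge 1$ almost surely, so that parasites are removed only upon reaching an immune host. The cornerstone for the monotonicity is that the visited sets are themselves monotone in both parameters: for $0<p<p^\prime\le 1$ and finite $\mathcal{I}\subset\mathcal{I}^\prime$ I claim
\[
\mathcal{V}^p_{x,i}(\mathcal{I}) \subset \mathcal{V}^{p^\prime}_{x,i}(\mathcal{I}^\prime)\qquad\text{for all }(x,i)\in V\times\N.
\]
Since the trajectory $(Y^{x,i}_n)_{n\ge0}$ does not depend on $p$ or $\mathcal{I}$, it suffices to compare the stopping times in~\eqref{eq:perc_coupl}. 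The walk survives step $n$ under $(p,\mathcal{I})$ exactly when $Y^{x,i}_n\in\mathcal{I}$ or $I^p_{Y^{x,i}_n}=1$; since $\mathcal{I}\subset\mathcal{I}^\prime$ and, by the coupling of immunities, $U_y\le p$ implies $U_y\le p^\prime$ (so $I^p_y=1\Rightarrow I^{p^\prime}_y=1$), survival under $(p,\mathcal{I})$ at step $n$ forces survival under $(p^\prime,\mathcal{I}^\prime)$ at step $n$. Hence $\tau^p_{x,i}(\mathcal{I})\le\tau^{p^\prime}_{x,i}(\mathcal{I}^\prime)$ and the inclusion of visited sets follows.

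Given this, I would obtain the inclusion of eventually-infected sets by truncating infection sequences. Let $x\in\mathcal{I}^p_\infty(\mathcal{I},\eta^{\text{reg},p})$; if $x\in\mathcal{I}^\prime$ there is nothing to prove, so assume $x\notin\mathcal{I}^\prime$, hence $x\notin\mathcal{I}$, and $x$ is reached by an infection sequence $x_0,\dots,x_n=x$ with indices $i_0,\dots,i_{n-1}$ as in~\eqref{eq:infection_sequence}. Let $j$ be the largest index with $x_j\in\mathcal{I}^\prime$, which exists because $x_0\in\mathcal{I}\subset\mathcal{I}^\prime$, and note $j<n$. Then $x_j,\dots,x_n$ with $i_j,\dots,i_{n-1}$ is an infection sequence for $(\mathcal{I}^\prime,\eta^{\text{reg},p^\prime})$: the vertices $x_{j+1},\dots,x_n$ lie in $V\setminus\mathcal{I}^\prime$ by maximality of $j$, the constraints $1\le i_m\le A_{x_m}-1$ are inherited (and $i_j\le A_{x_j}$ holds a fortiori at the new source), and each step is valid since $x_{k+1}\in\mathcal{V}^p_{x_k,i_k}(\mathcal{I})\subset\mathcal{V}^{p^\prime}_{x_k,i_k}(\mathcal{I}^\prime)$ by the inclusion above. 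Thus $x\in\mathcal{I}^{p^\prime}_\infty(\mathcal{I}^\prime,\eta^{\text{reg},p^\prime})$.

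For the survival characterization I would argue both inclusions of events, using first the elementary fact that up to any finite time only finitely many parasites have been created (finitely many at time $0$, and each step adds at most finitely many), so only finitely many sites are infected by any finite time. If $\mathcal{I}^p_\infty$ is infinite, infections then occur at arbitrarily large times, and since each infection is carried out by a living parasite that, because $A\ge 1$, remains in the system, the population can never have become empty (which would preclude later infections); hence it survives forever. Conversely, suppose the population survives but $\mathcal{I}^p_\infty$ is finite. Then only finitely many parasites are ever created, and after the last (finite) infection time the infected set equals the fixed finite set $\mathcal{I}^p_\infty$. As a simple symmetric random walk on an infinite graph leaves every finite set almost surely, each surviving parasite almost surely reaches a vertex outside $\mathcal{I}^p_\infty$; such a vertex is uninfected and must be immune, since a susceptible uninfected vertex would trigger a further infection and contradict the finiteness of $\mathcal{I}^p_\infty$, so the parasite is killed in finite time. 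Finitely many parasites all dying in finite time contradicts survival, which yields the claimed equivalence.

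The monotonicity of the visited sets and the truncation step are routine bookkeeping once the representation~\eqref{eq:infection_sequence} is in hand. The step demanding the most care is the converse direction of the survival characterization, where one must exclude a parasite surviving forever while confined to the finite region $\mathcal{I}^p_\infty$. This is exactly the point that requires both the hypothesis $A\ge 1$ and the fact that a simple symmetric random walk on an infinite graph exits every finite set almost surely.
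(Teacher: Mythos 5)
Your proposal is correct and follows essentially the same route as the paper: establish $\tau^p_{x,i}(\mathcal{I})\le\tau^{p^\prime}_{x,i}(\mathcal{I}^\prime)$ and hence $\mathcal{V}^p_{x,i}(\mathcal{I})\subset\mathcal{V}^{p^\prime}_{x,i}(\mathcal{I}^\prime)$ from the coupling of the immunities, pass a (truncated) infection sequence from \eqref{eq:infection_sequence} to the primed process, and characterise survival via the a.s.\ finiteness of each parasite's lifetime and range together with the fact that the infection needs at least $d_G(x,y)$ steps to travel from $x$ to $y$. Your write-up simply spells out the suffix-extraction and the extinction argument in more detail than the paper does.
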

\begin{proof}
  Because $I^p_x \ge I^{p^\prime}_x$ for all $x\in V$, we obtain $\tau^p_{x,i}(\mathcal{I}) \le \tau^{p^\prime}_{x,i}(\mathcal{I}^\prime)$ and thus we also have the inclusion $\mathcal{V}^p_{x,i}(\mathcal{I}) \subset \mathcal{V}^{p^\prime}_{x,i}(\mathcal{I}^\prime)$ for all $(x,i)\in V\times\N$. This already shows the first claim, because for $x\in \mathcal{I}^p_\infty(\mathcal{I},\eta^{\text{reg},p})\setminus\mathcal{I}^\prime$ any sequence, as in \eqref{eq:infection_sequence}, contains a subsequence that satisfies \eqref{eq:infection_sequence} for $\mathcal{I}^{p^\prime}_\infty(\mathcal{I}^\prime,\eta^{\text{reg},p^{\prime}})$.

  The second claim is true, because $\tau^p_{x,i}(\mathcal{I})<\infty,\vert\mathcal{V}^p_{x,i}(\mathcal{I})\vert < \infty$ almost surely for all $(x,i)\in V\times \N$, which shows the $\subset$ inclusion, and it takes at least $d_G(x,y)$ time steps for the infection to spread from $x$ to $y$, which shows the $\supset$ inclusion.
\end{proof}
Finally, we give a concrete example that shows the coupling in this section is not almost surely monotone in the parameter $p$, if $\mathbb{P}(A=0) > 0$.
\begin{example}\label{ex:notmon}
We give a realisation of $\mathbf{Y},\mathbf{U},\mathbf{A}$ on $\Z^2$ where the process with a higher probability of susceptible hosts dies out and the process with lower probability survives in figure \ref{fig:nonmon}.

\begin{figure}[ht]\label{fig:nonmon}
\begin{center}\includegraphics[scale = 0.8]{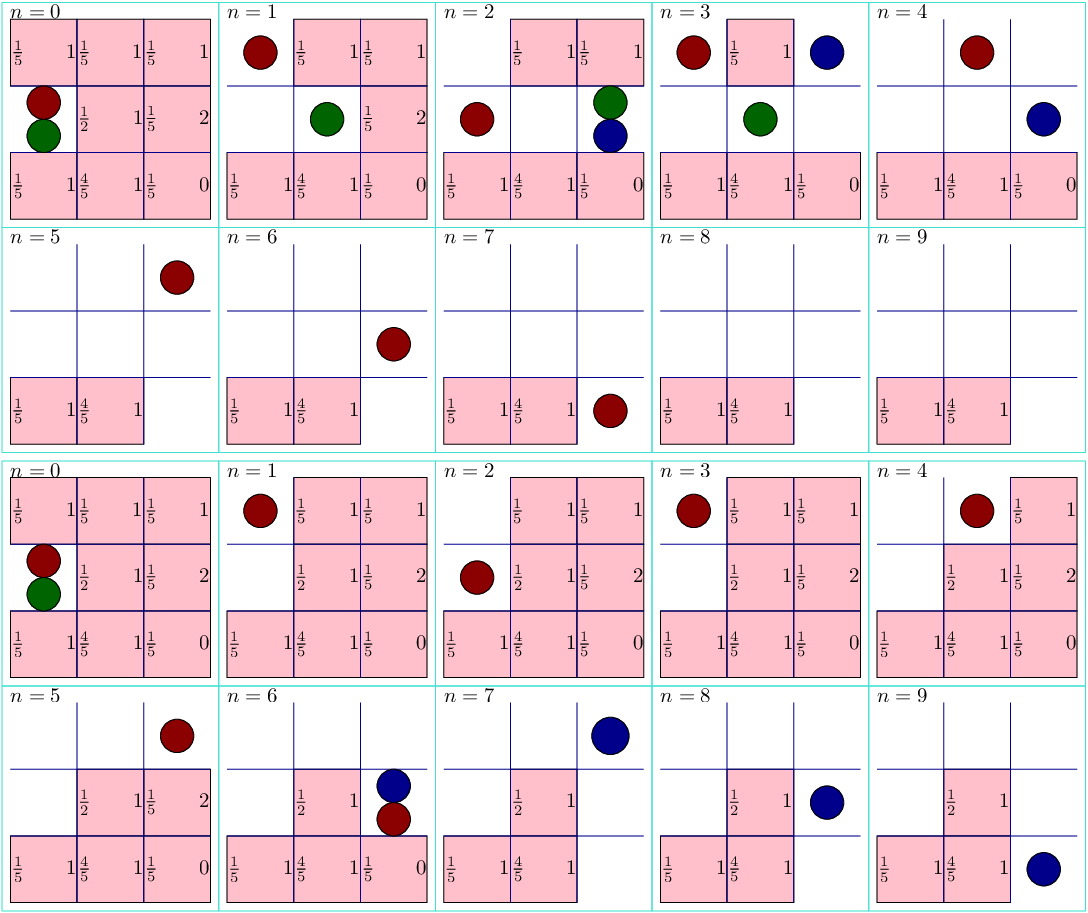}
\end{center}
 \caption{Each coloured square represents a living host, and each circle represents a parasite, with the colour corresponding to their label. The right number in each host is the value $A_x$ of offspring, and the left number is the value of $U_x$, determining if the host is immune, according to $U_x > \frac34$ in the upper part and $U_x > \frac14$ in the lower part.
}
\end{figure}
 We follow the infections for the cases $p^\prime = \frac{3}{4}$ in the upper part and for $p=\frac14$ in the lower part, starting from the same initial configuration and using the same random walks to determine the paths of the coloured parasites.
The different behaviour stems from the fact that the host in the centre is immune for $p=\frac{1}{4}$ and susceptible for $p=\frac{3}{4}$. Thus, it prevents the green parasite from waking up the blue parasite early. Thereby the blue parasite's death at the bottom left host is prevented by the sacrifice of the red parasite, which dies at the middle bottom host anyway. The blue parasite may then move to the right and start a new infection.
\end{example}
\subsection{Vertex-wise path construction}\label{subsec:vertjumps}
In this subsection we construct the SIMI by assigning each vertex a sequence of directions in which arriving parasites will leave the vertex. We call this the \emph{vertex-wise} construction.

The goal is to establish an analogue for Lemma \ref{lem:entpath_couple} in the case that $A=0$ with positive probability, which for the previous parasite-wise construction is not true, as seen in Example \ref{ex:notmon}. Suppose there is some initial configuration $(\mathcal{I},\eta)\in \mathbb{S}$ and a $p\in(0,1]$. For readability, we drop the reference to the initial configuration and in the following just write $\mathcal{I}^p_n,\eta^p_n$ instead of $\mathcal{I}^p_n(\mathcal{I},\eta),\eta^p_n(\mathcal{I},\eta)$. Also, we again leave out the reference to the graph on which the process is defined, as the construction steps are analogous for any graph. To perform the proof of Lemma \ref{Lem:survdom}, we need to introduce some additional notation. At each time step $n\ge0$, using the collection $\mathbf{D}$ from Definition \ref{def:prob_space}, we draw the jump directions that each parasite will perform and then calculate what the new state is after these jumps, according to the dynamics of the model. For any $x\in V$ we keep track of the number $N_n^p(x)$ of jumps that occurred on site $x$ prior to time step $n$, where initially $N_0^p(x) = 0$ for all $x\in V$. Then, we use the variables
\[
D_{N_n^p(x)+1}^x,\dots, D_{N_n^p(x) + \eta_n^p(x)}^x
\]
to determine the jump directions of the $\eta_n^p(x)$ many parasites sitting on site $x$ at time $n$, and the variables $\mathbf{A},\mathbf{I}^p$ and the set $\mathcal{I}^p_n$ to determine the outcome of these jumps and calculate the new configuration $(\mathcal{I}_{n+1}^p,\eta_{n+1}^p)$. In particular, we assume some deterministic labelling rule for the order of jumps and then set $N_{n+1}^p(x) := N_n^p(x) + \eta_n^p(x)$.
It is clear that the processes $(\mathcal{I}_n^p(\mathcal{I},\eta),\eta_n^p(\mathcal{I},\eta))_{n\ge0}$ and $(\widetilde{\mathcal{I}^p_n}(\mathcal{I},\eta),\widetilde{\eta^p_n}(\mathcal{I},\eta))_{n\ge0}$ have the same distribution, and we now want to show an equivalent of Lemma \ref{lem:entpath_couple} that also holds if $A = 0$ with positive probability.
\begin{lemma}\label{Lem:survdom}
Let $0<p < p^\prime\le 1$, $\emptyset\neq\mathcal{I}\subset\mathcal{I}^\prime\subset \Z^d$, and suppose $A$ takes values in $\N_0$. Then we have
  \[
  \mathcal{I}^p_n(\mathcal{I},\eta^{\text{reg},p}) \subset \mathcal{I}^{p^\prime}_n(\mathcal{I}^\prime,\eta^{\text{reg},p^\prime}) ~\text{almost surely for all }n\ge0.
  \]
\end{lemma}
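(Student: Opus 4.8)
The plan is to prove the stronger, deterministic statement (holding for every realisation of $\mathbf{U},\mathbf{A},\mathbf{D}$ on which all $A_x$ are finite, hence almost surely) that for every $n\ge0$ one has both
\[
\mathcal{I}^p_n(\mathcal{I},\eta^{\text{reg},p}) \subset \mathcal{I}^{p^\prime}_n(\mathcal{I}^\prime,\eta^{\text{reg},p^\prime}) \quad\text{and}\quad N^p_n(x)\le N^{p^\prime}_n(x)\ \text{for all }x\in\Z^d,
\]
by a \emph{single} induction on $n$, where $N^p_n(x)$ is the departure counter from the vertex-wise construction. The crucial point is that the instantaneous occupancy $\eta^p_n(x)$ is \emph{not} the right monotone quantity (a site infected earlier in the $p^\prime$-process may have already shed its parasites), whereas the cumulative departure count $N^p_n(x)$ is monotone. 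The feature of the vertex-wise construction that makes this work is that the $k$-th parasite ever to leave a vertex $z$ always jumps to $D^z_k$, irrespective of $p$; hence ``having performed at least $k$ departures from $z$ by time $n$'' is exactly the statement $N^p_n(z)\ge k$, and the destination of that departure does not depend on $p$.

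For the base case, $\mathcal{I}^p_0=\mathcal{I}\subset\mathcal{I}^\prime=\mathcal{I}^{p^\prime}_0$ and $N^p_0(x)=0=N^{p^\prime}_0(x)$. For the step I would first propagate the counter inequality. If $x\notin\mathcal{I}^p_n$ there are no parasites on $x$, so $N^p_{n+1}(x)=N^p_n(x)\le N^{p^\prime}_n(x)\le N^{p^\prime}_{n+1}(x)$. If $x\in\mathcal{I}^p_n$, then also $x\in\mathcal{I}^{p^\prime}_n$ by the inductive hypothesis, and here I would use a bookkeeping identity: since at an already infected (hence empty) site every resident parasite leaves after exactly one step, the parasites ever present at $x$ up to time $n$ are precisely the $A_x$ offspring generated at the infection of $x$ together with all later arrivals at $x$, minus the single parasite that triggered the infection when $x\notin\mathcal{I}$. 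Writing $a^p_n(x)=\sum_{z\sim x} f_{z,x}(N^p_n(z))$ for the cumulative arrivals, where $f_{z,x}(K)=\#\{k\le K: D^z_k=x\}$ is nondecreasing, this reads
\[
N^p_{n+1}(x)=A_x+a^p_n(x)-\mathds{1}[x\notin\mathcal{I}],
\]
and likewise for $p^\prime$ with $\mathcal{I}$ replaced by $\mathcal{I}^\prime$. Since $\mathcal{I}\subset\mathcal{I}^\prime$ gives $-\mathds{1}[x\notin\mathcal{I}]\le-\mathds{1}[x\notin\mathcal{I}^\prime]$, and since the inductive hypothesis $N^p_n(z)\le N^{p^\prime}_n(z)$ together with monotonicity of each $f_{z,x}$ gives $a^p_n(x)\le a^{p^\prime}_n(x)$, I obtain $N^p_{n+1}(x)\le N^{p^\prime}_{n+1}(x)$.

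It then remains to propagate the infected-set inclusion. Take $y\in\mathcal{I}^p_{n+1}$. If $y\in\mathcal{I}^p_n$ we are done, since $\mathcal{I}^p_n\subset\mathcal{I}^{p^\prime}_n\subset\mathcal{I}^{p^\prime}_{n+1}$. Otherwise $y$ is freshly infected, so $y$ is susceptible in the $p$-coupling, i.e.\ $U_y\le p$, whence $U_y\le p^\prime$ and $y$ is susceptible also for $p^\prime$; moreover some neighbour $z$ performed a departure landing on $y$, i.e.\ there is $k\le N^p_{n+1}(z)$ with $D^z_k=y$. By the counter inequality just proved, $k\le N^{p^\prime}_{n+1}(z)$, so in the $p^\prime$-process the $k$-th departure from $z$, which also goes to $D^z_k=y$, has taken place by time $n+1$; at the time it lands on $y$, either $y$ was already infected for $p^\prime$ or this arrival infects the susceptible host at $y$. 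In both cases $y\in\mathcal{I}^{p^\prime}_{n+1}$, which closes the induction.

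The main obstacle is the bookkeeping identity and, more conceptually, recognising that one must compare cumulative departure counts rather than occupancies. Care is needed at the infection step, where several parasites may land on the same susceptible host simultaneously while only one is consumed; one must check that the resulting adjustment is the same constant $A_x-\mathds{1}[x\notin\mathcal{I}]$ in both processes except on $\mathcal{I}^\prime\setminus\mathcal{I}$, where it moves in the favourable direction, and that the $A_x=0$ case (host killed, no offspring) is covered by the same formula. Everything else reduces to monotonicity of the $f_{z,x}$ and the immunity coupling $U_y\le p\le p^\prime$, so once the identity is established the two inequalities close the induction simultaneously.
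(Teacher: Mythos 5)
Your proof is correct and follows essentially the same route as the paper's: an induction on $n$ in the vertex-wise construction that propagates the inclusion of infected sets together with the inequality of cumulative departure counters $N^p_n(x)\le N^{p^\prime}_n(x)$, using that the $k$-th departure from a vertex $z$ is governed by the same $D^z_k$ in both processes and that susceptibility is monotone in the coupling $\mathbf{U}$. Your bookkeeping identity merely makes explicit the inductive step that the paper phrases more tersely in terms of the sets $\mathcal{S}_n$ and $\mathcal{T}_n$ of performed and upcoming jumps.
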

\begin{proof}
For readability we drop the reference to the initial configuration and, e.g., simply write $\mathcal{I}^p_n$ instead of $\mathcal{I}^p_n(\mathcal{I},\eta^{\text{reg},p})$ and $\mathcal{I}_n^{p^\prime}$ instead of $\mathcal{I}_n^{p^\prime}(\mathcal{I}^\prime,\eta^{\text{reg},p^\prime})$. We do the same for any other variable in the construction of the process, i.e., for any variable used in the construction of $((\mathcal{I}_n^{p^\prime},\eta_n^{p^\prime})(\mathcal{I}^\prime,\eta^{\text{reg},p^\prime}))_{n\ge0}$ we only keep the sup-index $p^\prime$ to distinguish between the two processes.

\noindent For $n\ge 1$ we define the sets $\mathcal{S}_n^p := \{(x,k):x\in\mathcal{I}_n^p, 1\le k \le N_n^p(x)\}$ and $\mathcal{S}_n^{p^\prime}$ analogously. These sets contain the jumps that happened before time $n$ in each process. In particular, only the variables $D_k^x$ with $(x,k)$ in one of these sets are used before time $n$. Also, we define the sets of upcoming jumps $\mathcal{T}_n^p := \{(x,k): x\in\mathcal{I}_n^p,N_n^p(x) < k \le N_n^p(x) + \eta_n^p(x)\}$ and $\mathcal{T}_n^{p^\prime}$ analogously. We note that $\mathcal{S}_n^{p}\cup\mathcal{T}_n^p = \mathcal{S}_{n+1}^p$ and analogously for the primed variables.

We will show by induction that for all $n\ge0$ we have \begin{equation}\label{eq:IH}\mathcal{I}_n^p\subset\mathcal{I}_n^{p^\prime},N_n^p(x)\le N_n^{p^\prime}(x) \text{ for all }x\in\mathcal{I}_n^p\text{ and }\mathcal{T}_n^p \subset \mathcal{S}_{n+1}^{p^\prime}.\end{equation}
By assumption the induction hypothesis \eqref{eq:IH} holds for $n=0$. Suppose now that \eqref{eq:IH} holds for all $0\le m\le n$ and that $(x,k)\in\mathcal{T}_n^p$. By \eqref{eq:IH}, this jump is also performed in the primed process before time $n+1$, because $\mathcal{T}_n^p \subset\mathcal{S}_{n+1}^{p^\prime}$. But since $p < p^\prime$, any susceptible host in the unprimed process was also susceptible (or already infected at time $0$) in the primed process. In particular, the outcome of the jump $(x,k)$ leaves the conditions \eqref{eq:IH} in effect. To see this, observe that if $(x,k)$ produces no infection in the unprimed process, then \eqref{eq:IH} is trivially satisfied, and if $(x,k)$ produces an infection in the unprimed process, then this infection also happens (or has already happened) in the primed process.
  \end{proof}
This result may seem like the construction in Section \ref{subsec:entpath} was not necessary. However, in the case of $A\ge 1$, the fact that the infection can be calculated by following the sets $\{\mathcal{V}^p_{x,i}(\mathcal{I}):(x,i)\in V\times\N\}$ is an important feature that the vertex-wise construction in this section does not provide. The key consequence of this is that in the parasite-wise construction, the infection that started from some vertex $x$ almost surely is contained in the infection starting from some vertex $y\neq x$ after it infected $x$. In the vertex-wise construction of this section, this almost sure relation is no longer true, because if the process was already running when $x$ was infected, we use different parts of $\mathbf{D}$ to determine the jumps than we use if we freshly start the infection at $x$. In particular, for the proof of Theorem \ref{thm:crit_nontriv}, we make use of this feature of the parasite-wise construction to couple the process with a supercritical site percolation.
\section{Proofs of the main results}\label{sec:proofs}
In this section we prove the theorems stated in Section \ref{sec:mainres}. First we formally define the object of interest, the critical parameter $p_c(G,A)$, as follows.
\begin{definition}\label{def:crit_par}
Let $G=(V,E)$ be a graph. The value
\[
    p_c(G,A):= \inf\left\{p\in(0,1]:\mathbb{P}\left(\bigcap_{n\ge0}\{\# \eta_n^p(\{\mathbf{0}\}\subset V,\eta^{\text{reg},p}) > 0\}\right)>0\right\}
\]
    is called the critical parameter for the SIMI on $G$, where
    \[
    \#\eta_n^p(\{\mathbf{0}\}\subset V,\eta^{\text{reg},p}) := \sum_{x\in \mathcal{I}_n^p(\{\mathbf{0}\}\subset V,\eta^{\text{reg},p})} \eta_n^p(\{\mathbf{0}\}\subset V,\eta^{\text{reg},p})(x)
    \]
    is the amount of living parasites at time $n$.
    \end{definition}
    \begin{remark}
Note that by Lemma \ref{Lem:survdom}, the survival probability is monotone in $p\in(0,1]$. Thus, $p_c(G,A)$ is actually the critical parameter.
    \end{remark}

    \subsection{Proofs for survival results on \texorpdfstring{$\Z^d$}{Zd}}\label{subsec:proof_Zd}
    \begin{proof}[Proof of Theorem \ref{thm:crit_nontriv}]
    For $p$ close enough to $1$, we will couple the SIMI with a supercritical site percolation on $\Z^d$ such that if there is percolation, then the parasite population in the SIMI survives. The idea is to find large areas with only susceptible hosts that will be completely infected if one of the hosts in the centre of that area gets infected. Finding an infinite chain of these areas will yield the result.

    We note that for $p=1$, and because $A\ge 1$ almost surely, the construction in Section \ref{subsec:entpath} is exactly the construction of the classical frog model with random offspring distribution given by $A-1$. For this model, the set of infected sites satisfies a shape theorem (cf. \cite[Theorem 1.1]{A2001}). For $x\in\Z^d$ we abbreviate
    \[
    \xi^x_n := \widetilde{\mathcal{I}^1_n}(\{x\},\eta^{\text{reg},1}-\delta_x)
    \]
    for the set of infected sites in the SIMI at time $n$ with initially only site $x$ infected and $p =1$. We note here that we initially place only $A_x-1$ parasites on site $x$ because only those will be added in the system if site $x$ is infected in an already running process. Also, we note that due to the assumption $\mathbb{E}[A] > 1$, we have $\mathbb{P}(A \ge 2)>0$.

    \noindent Next, we set \[\overline{\xi}_n^x := \left\{y+\left(-\frac12,\frac12\right]^d:y\in\xi_n^x\right\}.\]
Let $\emptyset\neq\mathcal{A}_d\subset \R^d$ be the asymptotic shape of the frog model with offspring distributed as $A-1$. This means that for any $\varepsilon \in(0,1)$ there is an almost surely finite $n_0 = n_0(\varepsilon)$ such that
\[
(1-\varepsilon)\mathcal{A}_d\subset \frac{\overline{\xi}_n^\mathbf{0}}{n}\subset (1+\varepsilon)\mathcal{A}_d,~\text{ for all } n\ge n_0.
\]
We fix some $\varepsilon \in(0,1)$, then choose $r>0$ such that $B_\infty^d(\mathbf{0},r) \subset \mathcal{A}_d$, which is possible since $\mathcal{A}_d$ is symmetric, convex, and contains more than $1$ point (see \cite[Theorem 1.1]{A2001}), and set
\[
R(N) := \left\lceil\frac{N}{r(1-\varepsilon)}\right\rceil.
\]
By taking $N\in\N$ large enough, we can make
\[
\theta(N) := \mathbb{P}\left(\left.(1-\varepsilon)B_\infty^d(\mathbf{0},r)\subset \frac{\overline{\xi}^\mathbf{0}_{R(N)}}{R(N)}\right\vert A_0 \ge 2\right)
\]
as close to $1$ as we want. By the definition of $R(N)$, the event $(1-\varepsilon)B_\infty^d(\mathbf{0},r)\subset \frac{\overline{\xi}^\mathbf{0}_{R(N)}}{R(N)}$ implies that $B_\infty^d(\mathbf{0},N)\subset \overline{\xi}^\mathbf{0}_{R(N)}$. This means that with a probability $\theta(N)$ as close to $1$ as we want, the entire cube $B_{\infty}^d(\mathbf{0},N)$ is infected after $R(N)$ steps.

\noindent For the coupling of the SIMI with $p\neq 1$ with a supercritical site percolation, we will make $p$ close enough to $1$ (depending on $N$) such that with sufficiently high probability, for each $x\in\Z^d$ there is a large area $V_N(x)$ around $B_\infty^d(x,N)$ with only susceptible hosts. Thus, if $x$ is infected, then after $R(N)$ steps, the entire ball $B_\infty^d(x,N)$ will be infected with probability at least $\theta(N)$, because the SIMI inside $V_N(x)$ is just the frog model. In particular, the existence of an infinite chain of such balls $(B_\infty^d(x_n,N))_{n\ge1}$ with $x_{n+1} \in B_\infty^d(x_n,N)$ implies the survival of the parasite population. For convenience we assume that $N$ is odd and for
\[
x\in N\Z^d = \{(x_1,\dots,x_d)\in\Z^d: N\vert x_1,\dots,N\vert x_d\}
\]
define $V_N(x) := B_\infty^d\left(x,\frac{N-1}{2}+R(N)\right)$.
We now declare a site percolation on $N\Z^d$ as follows, which we will also sketch in Figure \ref{fig:perc}.

 \begin{figure}[ht]
  \begin{center}\includegraphics[scale = 0.8]{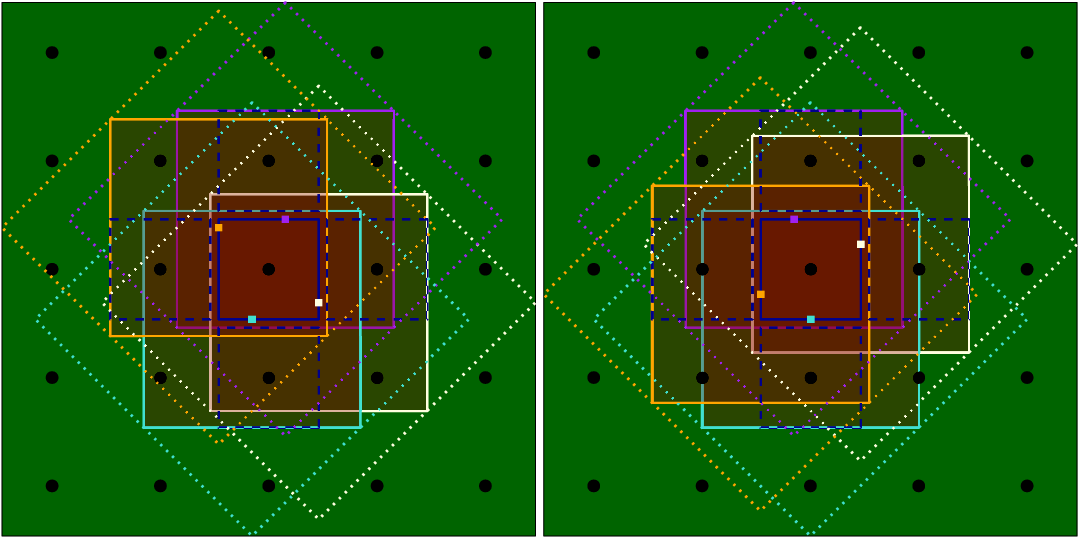}\end{center}
  \caption{Open sites in the constructed percolation on $N\Z^2$.
  The black dots represent the vertices of $N\Z^2$; the remaining vertices of $\Z^2$ are omitted for clarity.
  The vertex $x$, located at the centre of the left green square, is open, meaning the following. All hosts within the green square $V_N(x)$ are susceptible.
  Furthermore, on each face of the blue square of side length $N-1$ centred at $x$, there is a coloured vertex such that an infection starting at this vertex spreads throughout the dashed blue square surrounding the neighbouring vertex in $N\Z^2$ in the corresponding direction.
  More precisely, an infection initiated at a coloured vertex infects any host inside the pale red region within the square of side length $2N$ of the corresponding colour around that vertex within $R(N)$ time steps.
  During this time period, the infection does not reach (and therefore does not depend on) any vertices outside the dotted squares of the same colour.
  Consequently, the event that $x$ is open depends only on the hosts and parasites within the green square $V_N(x)$.
  Hence, this event depends on the open/closed status of only a deterministic number of other vertices in $N\Z^2$.
  By construction, an open vertex with its entire blue square infected also transmits the infection to the blue squares of all its neighbouring vertices in $N\Z^2$, and an infinite sequence of open vertices sustains the infection indefinitely.}
  \label{fig:perc}
 \end{figure}
\noindent We say $x\in N\Z^d$ is open if
\[
G_I(x):= \bigcap_{y\in V_N(x)\cap\Z^d} \{I_y^p = 1\} ~\text{ and }~G_F(x)  := \bigcap_{L\in F(x)}\bigcup_{j\in L\cap\Z^d} \{A_{j} \ge 2, B_\infty^d(j,N)\subset\overline{\xi}^{j}_{R(N)}\}
\]
where $F(x)$ are $2d$ many $(d-1)$-dimensional faces of the cube $B^d_\infty\left(x,\frac{N-1}{2}\right)$.
The event $G_I(x)\cap G_F(x)$ means that for each face $L\in F(x)$ there is a vertex $j \in L$ such that if $j$ gets infected, then in the frog model and, due to the event $G_I(x)$, also in the SIMI a sufficiently large ball around $j$ will be completely infected after $R(N)$ time steps. Clearly $G_I(x)$ and $G_F(x)$ are independent for each $x\in N\Z^d$. Also, by the independence of $\mathbf{I}^p$, we have $\mathbb{P}(G_I(x)) = p^{(N+2R(N))^d}$, and to calculate the probability of $G_F(x)$ we first note that for each face $L\in F(x)$
\[
\mathbb{P}\left(\bigcup_{j\in L\cap\Z^d}\{A_j \ge 2, B_\infty^d(j,N)\subset\overline{\xi}^{j}_{R(N)}\}\right) \ge (1-\mathbb{P}(A = 1)^{N^{d-1}})\theta(N).
\]
Trivially, the $2d$ many events in the intersection over $L\in F(x)$ are $2d$-dependent, i.e., the event for one $L\in F(x)$ only depends on at most $2d$ many events for other $L^\prime\in F(x)$. Thus, applying \cite[Theorem B26]{L1999}, we have
\[
\mathbb{P}(G_F(x)) \ge \left(1-\left(1-\theta(N)\left(1-\mathbb{P}(A = 1)^{N^{d-1}}\right)\right)^{\frac{1}{2d}}\right)^{4d}.
\]
Taking $N$ large enough and $p$ close to $1$, we can make $\mathbb{P}(G_I(x),G_F(x)) \ge \rho$ for any $\rho \in (0,1)$.
Next we observe that for any $L\in F(x)$, the frog model started from any $j\in L$ cannot leave the set $V_N(x)$ until time $R(N)$, because parasites can only move to their neighbours in each time step. Hence, if $\lVert x-x^\prime\rVert_\infty \ge N+2R(N)$, then $G_I(x)\cap G_F(x)$ is independent of $G_I(x^\prime)\cap G_F(x^\prime)$. Counting the amount of $x^\prime \in N\mathbb{Z}^d$ with this distance to $x$, we thus obtain that the collection of variables $\{\mathds{1}_{G_I(x),G_F(x)} : x\in N\Z^d\}$ is $K$-dependent with $K = \left(6+\frac{4}{r(1-\varepsilon)}\right)^d$.
 In particular, choosing $\rho\in (0,1)$ such that $\left(1-(1-\rho)^{\frac{1}{K}}\right)^2 > \widetilde{p}_c(\Z^d)$, where $\widetilde{p}_c(\Z^d)$ is the critical parameter of Bernoulli site percolation on $\Z^d$, we obtain using \cite[Theorem B26]{L1999} again that with positive probability there is percolation.

Now, using Lemma \ref{lem:entpath_couple}, in the event that $\mathbf{0}$ is in the infinite cluster of this percolation and that the initial parasites in $\mathbf{0}$ reach all sites of $B_\infty^d\left(\mathbf{0},\frac{N+1}{2}\right)$ before leaving $V_N(\mathbf{0})$, the SIMI will survive for infinite time. As argued above, this happens with positive probability, and thus we can conclude that $p_c(\Z^d,A) < 1$. \qedhere
\end{proof}

\begin{proof}[Proof of Theorem \ref{thm:crit_genA_Z}]
Consider $\widetilde{I}^p_x := 1 + \infty\mathds{1}_{U_x > p} + \infty\mathds{1}_{A_x = 0}$. Using these variables as immunities, only vertices with $A_x \ge 1$ can be infected, and the resulting process will be given by the SIMI with $\widetilde{p} := \max\{0,p - \mathbb{P}(A=0)\}$ and offspring distributed as $\widetilde{A}$. Also, this process clearly is dominated by the original process, with offspring distributed as $A$ and susceptible hosts appearing with probability $p$. Hence the claim follows by Theorem \ref{thm:crit_nontriv}.
\end{proof}

\subsection{Proofs for survival results and asymptotic critical probability on \texorpdfstring{$\mathbb{T}_d$}{Td}}\label{subsec:proof_Td}
\begin{proof}[Proof of Theorem \ref{thm:Tdphase}]
  The proof is a minor adaptation of the proof for the corresponding result in \cite[Theorem 1.5]{A2002a}. For $x\neq \mathbf{0}\in\mathbb{T}_d$ we denote by $\mathbb{T}_d^+(x) = \{y\in\mathbb{T}_d:y\ge x\}$. Fix some vertex $x_{0,\mathbf{0}}$ adjacent to the root $\mathbf{0}$ and set $\mathbb{T}_d^+ := \mathbb{T}_d\setminus\mathbb{T}_d^+(x_{0,\mathbf{0}})$. For $(x,i)\in \mathbb{T}_d\times\N$ we denote by $\mathcal{W}_n^{x,i} := \{Y^{x,i}_m:0\le m\le n\}$ the set of vertices ever visited by the parasite $(x,i)$ up to time $n$ in the case $p=1$, and simply write $\mathcal{W}_n := \mathcal{W}_n^{\mathbf{0},1}$. Also, for any finite connected set $\mathcal{J}\subset\mathbb{T}_d$ with $\mathbf{0}\in\mathcal{J}$, we denote by
  \[
\partial_e(\mathcal{J}) := \mathcal{J}\setminus\{x\in \mathcal{J}\vert\exists y\in\mathcal{J}: y>x\}
  \]
  the external boundary of $\mathcal{J}$, that is, the set of vertices from which $\mathcal{J}$ can be left in one jump. We make the following observations:
  Since a random walk on $\mathbb{T}_d$ is transient, we have
  \begin{itemize}
    \item with positive probability the random walk never leaves $\mathbb{T}_d^+$, i.e., $\mathcal{W}_n\subset\mathbb{T}_d^+$ for all $n\ge 0$ with positive probability.
    \item $\vert \partial_e(\mathcal{W}_n)\vert$ is nondecreasing and $\vert\partial_e(\mathcal{W}_n)\vert\to\infty$ almost surely as $n\to\infty$.
  \end{itemize}
  To show a positive survival probability, we construct a supercritical Bienaymé-Galton-Watson branching process $(\xi_n)_{n\ge0}$ that is dominated by the number of living parasites in the SIMI. Initially we set $\zeta_0 = \{\mathbf{0}\}$ and $\xi_0 := \vert\zeta_0\vert$. If all parasites on $\mathbf{0}$ leave $\mathbb{T}_d^+$, i.e., for all $1\le i\le A_\mathbf{0}$ we have $\mathcal{W}^{\mathbf{0},i}_n\not\subset\mathbb{T}_d^+$
  for some $n\ge 0$, then we set $\zeta_1 := \emptyset,\xi_1 := 0$, and the Bienaymé-Galton-Watson branching process dies out. Else, we let
  \[
  j_{1,\mathbf{0}} := \inf\{1\le  i\le A_{\mathbf{0}}\vert\forall n\ge0: \mathcal{W}^{\mathbf{0},i}_n\subset\mathbb{T}_d^+\}
  \]
  be an index of a parasite that stays inside $\mathbb{T}_d^+$ and let
  \[
  \zeta_1 := \partial_e\left(\mathcal{V}_{\mathbf{0},j_{1,\mathbf{0}}}^p(\{\mathbf{0}\})\right)\setminus\left\{Y^{\mathbf{0},j_{1,\mathbf{0}}}_{\tau_{\mathbf{0},j_{1,\mathbf{0}}}^p-1}\right\}\subset\mathbb{T}_d^+
  \]
  be the external boundary of the set of sites visited by the parasite $(\mathbf{0},j_{1,\mathbf{0}})$, excluding the site from which it jumped onto a completely immune host. Again we set $\xi_1 := \vert\zeta_1\vert$. We note that by construction we have $\mathbb{T}_d(x) \cap\mathbb{T}_d(y) = \emptyset$ for $x,y\in\zeta_1$ with $x\neq y$.

  \noindent For $n\ge 1$, if $\zeta_n = \emptyset$, we set $\zeta_{n+1} = \emptyset$ and $\xi_{n+1} = 0$. Else, for each $x\in\zeta_n$, we determine the offspring of $x$ as follows. If $A_{x}= 1$, then $x$ has no offspring. If $A_{x}\ge2$ and for all $1\le i \le A_{x}-1$ we have $\mathcal{W}^{x,i}_n\not\subset\mathbb{T}_d(x)$ for some $n\ge 1$, then $x$ has no offspring. Else, we set
  \[
  j_{n+1,x} := \inf\{1\le i\le A_{x}-1\vert \forall n\ge 1:\mathcal{W}^{x,i}_n\subset\mathbb{T}_d(x)\}
  \]
  and define the offspring of $x$ as
  \[
  \partial_e\left(\mathcal{V}_{x,j_{n+1,x}}^p(\{x\})\right)\setminus\left\{Y^{x,j_{n+1,x}}_{\tau_{x,j_{n+1,x}}^p-1}\right\}\subset\mathbb{T}_d^+(x).
  \]
Finally we define $\zeta_{n+1}$ as the union of all offspring of elements in $\zeta_n$ and $\xi_{n+1} := \vert\zeta_{n+1}\vert$. Because the exploration for offspring only considers a so-far unexplored subtree, we obtain that $(\xi_n)_{n\ge0}$ is a Bienaymé-Galton-Watson branching process. Also, it is clear that by taking $p$ close to $1$ we can make it supercritical, because the number of visited sites $\vert\mathcal{V}_{x,j_{n+1,x}}^p(\{x\})\vert$ has a geometric distribution with parameter $1-p$ and thus goes to $\infty$ as $p$ approaches $1$.
\end{proof}
\begin{proof}[Proof of Theorem \ref{thm:crit_genA_T}] The proof of Theorem \ref{thm:crit_genA_T} is completely analogous to the proof of Theorem \ref{thm:crit_genA_Z}, substituting Theorem \ref{thm:Tdphase} for Theorem \ref{thm:crit_nontriv}. \end{proof}
\begin{proof}[Proof of Theorem \ref{thm:Tdasympt}]
The proof is analogous to the one for the corresponding result in \cite[Theorem 1.7]{A2002a}. To highlight why this approach also works in our model, we give the proof here again.

Since by Theorem \ref{thm:trivpos} we have $p_c(\mathbb{T}_d,A)\ge\frac{1}{\mathbb{E}[A]}$, it suffices to show that for any $p>\frac{1}{\mathbb{E}[A]}$ the SIMI survives on $\mathbb{T}_d$ for large enough $d$. For $s\in\N$, let $A^{(s)} := A\mathds{1}_{A\le s}$ and note that by monotone convergence $\mathbb{E}[A^{(s)}]\to\mathbb{E}[A]$. Thus, for $s$ large enough, $p>\frac{1}{\mathbb{E}[A^{(s)}]}$ and it suffices to show that the SIMI on $\mathbb{T}_d$ with offspring distributed as $A^{(s)}$ survives with positive probability. We construct an auxiliary process $(\widetilde{\xi}_n)_{n\ge0}$ that is dominated by the living parasites at time $n$ and also dominates a supercritical Bienaymé-Galton-Watson branching  process. Initially all parasites on $\mathbf{0}$ belong to $\widetilde{\xi}_0$. For $n\ge0$, the collection $\widetilde{\xi}_{n+1}$ will consist of parasites at distance $n+1$ to the root and will be constructed as follows. We assume some deterministic rule for the order in which the parasites are treated. Then if a parasite in $\widetilde{\xi}_n$ jumps, the following happens:
  \begin{itemize}
    \item If the parasite jumps towards the root, it is removed without offspring.
    \item If the parasite jumps onto some vertex at distance $n+1$ from the root and a different parasite has already jumped to that vertex, then it is removed without offspring.
    \item If the parasite jumps onto some vertex at distance $n+1$ from the root, it is the first parasite to jump onto that vertex, and the host on that vertex is completely immune, then it is removed without offspring.
    \item If the parasite jumps onto some vertex at distance $n+1$ from the root, is the first parasite to jump onto that vertex, and the host on that vertex is susceptible, then any new parasites generated (according to $A^{(s)})$ on that vertex are added to $\widetilde{\xi}_{n+1}$ as its offspring.
  \end{itemize}
  We note that by definition, each vertex can only be inhabited by at most $s$ parasites in $\widetilde{\xi}_n$. Also, because offspring can only get generated whenever a parasite jumps onto a never-before-visited vertex, the probability for that host to be susceptible is $p$, independent of everything else in $\widetilde{\xi}_n$, which is the crucial fact as to why the approach of \cite{A2002a} still works in our model. Hence, $(\widetilde{\xi}_n)_{n\ge0}$ dominates a Bienaymé-Galton-Watson branching process with mean offspring
 \[
 \frac{d-s}{d}\mathbb{E}[A^{(s)}]p.
 \]
 This corresponds to the worst case that $d-(s-1)$ directions are already used up by the other parasites on the vertex and $1$ direction is towards the root, resulting in at least $d-s$ free directions.
Taking $d$ large enough, this can be made greater than $1$, and thus the Bienaymé-Galton-Watson branching process, and hence the SIMI, survives with positive probability.
\end{proof}

\subsection{Proof for the non-monotonicity of \texorpdfstring{$p_c$}{pc} as a function of the graph}\label{subsec:proof_nonmoncrit}~

In this section we aim to show that the critical parameter for survival is not monotone in the underlying graph. To be precise, we will construct a sequence of graphs $(G^{(n)})_{n\ge1}$ such that each $G^{(n)}$ contains $\mathbb{T}_3$ as a subgraph, but the critical parameters with almost surely $3$ offspring at an infection satisfy $p_c(G^{(n)},3) \to 1$ as $n\to\infty$. The approach is similar to that of \cite{F2004} and relies on attaching a complete graph with some large degree $n$ to each vertex of $\mathbb{T}_3$, in which parasites get trapped and eventually killed.

Denote by $E_3$ the set of edges of the graph $\mathbb{T}_3$. Let $x\in\mathbb{T}_3$ and
\[G_n(x) := (V_n(x),E_n(x)) := (\{(x,0),(x,1),\dots,(x,n)\},\{\{(x,i),(x,j)\}:0\le i < j \le n\})
\]
be a complete graph with $n+1$ vertices attached to $x$. Consider the graph $G^{(n)} = (V^{(n)},E^{(n)})$ with
\[
V^{(n)} = \bigcup_{x\in\mathbb{T}_3}V_n(x),~~E^{(n)} = \{\{(x,0),(y,0)\}:\{x,y\}\in E_3\}\cup\bigcup_{x\in\mathbb{T}_3} E_n(x).
\]
\begin{proposition}\label{prop:compgraph}
    For any $p\in (0,1)$ there is an $n_0\in\N$ such that for all $n\ge n_0$ we have
    \[
    p_c(G^{(n)},3) \ge p.
    \]
    \end{proposition}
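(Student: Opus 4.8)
We need to show that attaching a complete graph $G_n(x)$ of size $n+1$ to each vertex $x$ of $\mathbb{T}_3$ makes the critical parameter $p_c(G^{(n)},3)$ go to $1$ as $n\to\infty$. The intuition is that parasites get "trapped" inside these complete graphs and die before escaping to propagate the infection across the tree backbone.

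**Key structural observation:** The only way the infection spreads from one "cell" (the complete graph attached to a tree vertex) to a neighboring cell is through the backbone edges $\{(x,0),(y,0)\}$. So a parasite must navigate from wherever it's born inside a cell to the "gateway" vertex $(x,0)$, then jump to an adjacent cell's gateway $(y,0)$. If most hosts inside a cell are immune (which happens with probability $1-p$ per host), the parasite will likely hit an immune host inside the large complete graph before reaching a gateway.

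Let me think about the proof strategy.

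---

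The plan is to show that for $p$ fixed and $n$ large, the infection cannot spread through the backbone of $\mathbb{T}_3$, so the parasite population dies out almost surely — this gives $p_c(G^{(n)},3)\ge p$. The key is a percolation-domination argument running in the opposite direction to Theorem \ref{thm:crit_nontriv}: instead of coupling from below with a supercritical percolation to prove survival, I will couple from above with a subcritical branching process to prove extinction.

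First, I would set up the combinatorial structure. The infection can only propagate between the cells $G_n(x)$ through the backbone edges $\{(x,0),(y,0)\}$ with $\{x,y\}\in E_3$. Declare a tree-vertex $x\in\mathbb{T}_3$ to be \emph{reached} if at some point a parasite occupies the gateway vertex $(x,0)$. I would then bound, uniformly in the history of the process, the conditional probability that a newly reached cell $G_n(x)$ successfully forwards the infection to any \emph{specific} neighbouring gateway $(y,0)$. The crucial estimate is this: conditioning on $(x,0)$ being freshly infected, in order for the infection to reach a neighbour $(y,0)$, some parasite born inside $G_n(x)$ must perform a simple random walk on the complete graph $K_{n+1}$ (with one distinguished backbone exit per neighbour) and exit through the correct backbone edge \emph{before} being killed at an immune host. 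Since a walk on $K_{n+1}$ visits a fresh uniformly-random vertex at each step, and each fresh vertex is immune with probability $1-p$ independently, the number of distinct interior vertices visited before hitting an immune host is stochastically dominated by a geometric random variable with parameter $1-p$; meanwhile the walk hits the specific low-degree backbone gateway $(x,0)$ only with probability $O(1/n)$ per step. Multiplying, the expected number of successful forwardings out of a single cell is at most something like $\frac{C}{(1-p)\,n}$, which I can make arbitrarily small by taking $n$ large.

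Second, I would make this into a branching-process domination. Let me track the set of reached gateways as the generations of a branching process on $\mathbb{T}_3$: generation $0$ is $\mathbf{0}$, and the offspring of a reached vertex $x$ are the neighbours $y$ whose gateways get infected by a parasite originating in $G_n(x)$. Because the interior of each cell $G_n(x)$ uses its own independent host-immunities $\{U_v : v\in V_n(x)\}$ and its own independent walk increments, and because the tree has no cycles in its backbone, these forwarding events across distinct cells are genuinely independent given the reached-set history — this is where $\mathbb{T}_3$ being a tree is essential, since it guarantees that once a cell forwards the infection onward, the walks that did so never need to be re-examined. Each reached vertex has at most $3$ backbone neighbours, and by the estimate above the probability of forwarding to each is at most $\frac{C}{(1-p)\,n}$. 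Hence the reached-set is dominated by a Bienaymé-Galton-Watson process with mean offspring at most $\frac{3C}{(1-p)\,n}$. Choosing $n_0$ so that this mean is below $1$ for all $n\ge n_0$ makes the branching process subcritical, so it dies out almost surely; consequently only finitely many cells are ever reached, only finitely many parasites are ever generated, and the SIMI dies out in finite time. By Definition \ref{def:crit_par} this yields $p_c(G^{(n)},3)\ge p$.

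The main obstacle I anticipate is making the per-cell forwarding estimate fully rigorous, because the relevant random walk does not live on $K_{n+1}$ alone: upon reaching the gateway $(x,0)$ a parasite may step out onto a neighbouring gateway $(y,0)$ and then walk into the neighbouring cell $G_n(y)$, so I must be careful to define ``forwarding'' as the first backbone crossing and to account for the $3$ possible backbone exits at the gateway competing with the $n$ interior vertices. The clean way to handle this is to observe that while a parasite is inside a single cell, its walk is a lazy-type walk on $K_{n+1}$ in which, from the gateway, the probability of taking a backbone step (and thereby forwarding) in any given visit is at most $\tfrac{3}{n}$, whereas at every step a not-previously-visited interior vertex kills the parasite with probability $1-p$; a renewal/first-passage decomposition then shows the total forwarding probability per cell is $O(1/n)$, uniformly over how the parasite entered. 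I would also need to confirm that having up to $3$ offspring (rather than just $1$) at each successful interior infection only changes constants: it merely multiplies the number of independent walk attempts within a cell by a bounded factor, leaving the $O(1/n)$ conclusion intact. With these points addressed, the subcriticality and hence the claimed lower bound on $p_c(G^{(n)},3)$ follow.
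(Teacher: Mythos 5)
Your overall strategy --- dominate the spread along the $\mathbb{T}_3$ backbone by a branching process that becomes subcritical for large $n$ because parasites must escape a complete graph seeded with immune hosts --- is the same as the paper's. But there is a genuine quantitative gap in the per-cell estimate, located precisely where you wave it away: the claim that allowing $3$ offspring per infection ``only changes constants'' and ``multiplies the number of independent walk attempts within a cell by a bounded factor'' is false. For $p$ close to $1$, a positive fraction (indeed almost all) of the $n$ interior hosts of a cell are susceptible; the within-cell infection is itself a supercritical branching process (each parasite infects on average about $p/(1-p)$ fresh hosts before dying, each spawning two new walkers), so the number of parasites ever alive in one cell is $\Theta(n)$, not $O(1)$. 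Your per-parasite escape estimate --- geometric$(1-p)$ lifetime times an $O(1/n)$ per-step chance of hitting the gateway --- yields only $O\bigl(1/((1-p)n)\bigr)$ per parasite, and multiplying by $\Theta(n)$ parasites gives an expected number of escapees of order $1$, which does not establish subcriticality. What saves the argument, and what the paper computes exactly, is that an interior-born parasite must survive \emph{two} consecutive rare events: first it must reach the unique gateway before being absorbed by one of the roughly $(1-p)n$ immune hosts (probability $\tfrac{n-L}{n(L+1)}=O(1/((1-p)n))$ given $L$ immune hosts), and only then exit through one of the $3$ backbone edges out of $n+3$ before wandering back in and dying (an additional factor of order $3/n$). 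This gives $O(1/n^2)$ per interior parasite, hence $O(1/n)$ per cell after summing over the $2n$ interior walkers plus the $3$ gateway walkers; the paper makes this rigorous with the explicit formula $\mathbb{P}(\gamma<\tau\mid L=\ell)=\tfrac{3(n-\ell)}{n((n+4)\ell+3)}$ and an inverse binomial moment bound to average over $L\sim\mathrm{Bin}(n,1-p)$. Without the second factor of $1/n$ your computation does not close.

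A secondary structural issue: your branching process is indexed by reached tree-vertices, with offspring the set of neighbouring gateways that get infected. The probability that cell $x$ forwards to a given neighbour depends on how many parasites entered $G_n(x)$ from its parent, which is history-dependent and unbounded, so the offspring law is not i.i.d.\ and the domination by a fixed Bienaym\'e--Galton--Watson process needs justification. The paper avoids this by taking the \emph{escaping parasites} as the individuals of the branching process and maintaining an explored-region bookkeeping (each new individual is charged with the exploration of one fresh cell, ignoring deaths inside already-explored territory), which makes the offspring counts genuinely i.i.d. You would need either to adopt that parasite-counting scheme or to prove a forwarding bound uniform over the number of entering parasites.
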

\begin{proof}
Let $n\in\N$ and $p\in(0,1)$. We will couple the SIMI on $G^{(n)}$ with a Bienaymé-Galton-Watson branching process, such that extinction of this Bienaymé-Galton-Watson branching process implies the extinction of the SIMI, and then show that for $n$ large enough the Bienaymé-Galton-Watson branching process is subcritical.

For readability we will (in some cases) drop the reference to $n$ and $p$, and because $A=3 \ge 1$ almost surely we consider the SIMI defined through the parasite-wise coupling. Initially, only the parasites in $(\mathbf{0},0)$ are placed, and thus we set \[\zeta_0:= \{((\mathbf{0},0),1)\},\] $\xi_0 = 1$, noting that the parasites $((\mathbf{0},0),2),((\mathbf{0},0),3)$ will in the next step be considered as potential offspring of $((\mathbf{0},0),1)$. Also, we let $U_0 = \emptyset\subset V^{(n)}$ be the vertices already explored in the construction of the Bienaymé-Galton-Watson branching process. For $m\ge 1$ we now inductively define the variables $\zeta_m,\xi_m = \vert\zeta_m\vert,U_m$ as follows. Set $U_m(0) := U_{m-1}$ and use some deterministic way to order $\zeta_{m-1}$ (e.g., lexicographically) as
\[
\zeta_{m-1} = \{((x_m(1),k_m(1)),i_m(1)),\dots,((x_{m}(\ell_m),k_{m}(\ell_m)),i_{m}(\ell_m)\}.
\]
For $j = 1,\dots, \ell_m$ we now inductively define $\zeta_m(j),U_m(j)$ as follows. We ignore the potential death of the parasite with label $Z_m (j) := ((x_m(j),k_m(j)),i_m(j))$ and let
\[
t_m(j) := \inf\{t\ge0: Y^{Z_m(j)}_t \notin U_m(j-1)\}
\]
be the time it leaves the explored subgraph $U_m(j-1)$ and visits a new vertex and let
\[
(y_m(j),0) := Y^{Z_m(j)}_{t_m(j)}
\]
be the vertex at which it does so (by construction it will only be possible to leave $U_m(j-1)$ at the ground level). The offspring of $Z_m(j)$ will be generated as follows. First, the host at $(y_m(j),0)$ gets infected and killed (even if it is immune), and we generate the parasites with labels $((y_m(j),0),1)$ and $((y_m(j),0),2)$ (respectively, for $m=1,j=1$ and thus $y_1(1) = \mathbf{0}$, we generate $((\mathbf{0},0),2),((\mathbf{0},0),3)$ and also replace the corresponding labels in the upcoming generation of offspring accordingly). Then we place the immune hosts inside $V_n(y_m(j))\setminus\{(y_m(j),0)\}$ according to $\mathbf{I}^p$ and denote by
\[
\mathcal{H}_m(j) := \{(y_m(j),k): 1\le k \le n \text{ and } I^p_{(y_m(j),k)}  =\infty\}
\]
the vertices with immune hosts. Next, we generate all parasites
\[
\{((y_m(j),1),1),(y_m(j),1),2),\dots,((y_m(j),n),1),(y_m(j),n),2)\}
\] inside $V_n(y_m(j))\setminus\{(y_m(j),0)\}$ and immediately kill off the parasites that are placed on top of an immune host. Finally, we let the remaining parasites (including $((y_m(j),0),1)$, $((y_m(j),0),2)$ and $Z_m(j)$) move according to $\mathbf{Y}$ and keep only those that leave $V_n(y_m(j))$ before they die at an immune host. Precisely, recalling the notation \eqref{eq:perc_coupl}, we set
\[
\zeta_m(j)^\prime := \left\{((y_m(j),k),i):\begin{matrix}1\le k \le n,~i\in\{1,2\},I^p_{(y_m(j),k)} = 1 \text{ and }\\ \mathcal{V}^p_{(y_m(j),k),i}(\{(y_m(j),k)\}) \not\subset V_n(y_m(j))\end{matrix} \right\},
\]
\[
\zeta_m(j)^{\prime\prime} := \left\{((y_m(j),0),i):i\in\{1,2\} \text{ and } \mathcal{V}^p_{(y_m(j),0),i}(\{(y_m(j),0)\}) \not\subset V_n(y_m(j))\right\}
\]
and
\[
\zeta_m(j)^{\prime\prime\prime} := \begin{cases}
                                  \{Z_m(j)\} ,& \exists t \ge t_m(j):\begin{matrix} Y^{Z_m(j)}_t \notin V_n(y_m(j)) \text{ and }\\ \forall t_m(j)\le k < t: Y^{Z_m(j)}_k\in V_n(y_m(j))\setminus\mathcal{H}_m(j) \end{matrix}\\
                                  \emptyset ,& \text{ else,}
                                 \end{cases}
\]
and then set $\zeta_m(j) = \zeta_m(j)^\prime\cup\zeta_m(j)^{\prime\prime}\cup\zeta_m(j)^{\prime\prime\prime}$. To conclude the $j$-th step, we set the explored subgraph $U_m(j) := U_m(j-1)\cup V_n(y_m(j))$ and continue with the next parasite in $\zeta_{m-1}$. Finally, we set $U_m := U_m(\ell_m)$,
\[
\zeta_m = \bigcup_{j=1}^{\ell_m} \zeta_m(j)
\]
and $\xi_m = \vert\zeta_m\vert$.  By construction and observation \eqref{eq:infection_sequence}, since the paths of all potentially living parasites are considered, we have
\[
\limsup_{m\to\infty}\xi_m \ge \limsup_{t\to\infty} \#\eta_t^p(\{(\mathbf{0},0)\}) ~~\text{almost surely.}
\]
Also, because the production of the offspring of $((x_m(j),k_m(j)),i_m(j))$ only depends on the immu\-nity of hosts inside $V_n(y_m(j))$, the paths of parasites generated inside $V_n(y_m(j))$ and the path of $((x_m(j),k_m(j)),i_m(j))$ after the time $t_m(j)$, we obtain that $\vert\zeta_m(j)\vert$ is independent of \[\xi_0,\dots,\xi_{m-1},\vert\zeta_m(1)\vert,\dots,\vert\zeta_m(j-1)\vert.\] In particular $(\xi_m)_{m\ge1}$ is a Bienaymé-Galton-Watson branching process. Thus, it remains to show that this Bienaymé-Galton-Watson branching process becomes subcritical for large $n$, i.e.,
\[
\mathbb{E}[\xi_1] \le 1
\]
for some large enough $n$ and fixed $p\in (0,1)$.

First, we let
\[
\mathcal{H} :=  \mathcal{H}_1(1) = \{(\mathbf{0},k)\in V^{(n)}: 1\le k \le n, I^p_{(\mathbf{0},k)} = \infty\}
\]
be the vertices with immune hosts in the complete graph attached to $(\mathbf{0},0)$ and, for $0\le k \le n,i\in\mathbb{N}$, define
\[
\sigma(k,i) := \inf\{m>0: Y^{(\mathbf{0},k),i}_m = (\mathbf{0},0)\},~~\tau(k,i) = \inf\{m\ge0:Y_m^{(\mathbf{0},k),i} \in\mathcal{H}\}
\]
as the times the parasite $((\mathbf{0},k),i)$ returns to $(\mathbf{0},0)$, respectively, when it is on top of an immune host inside $V_n(\mathbf{0})$. Clearly
\[\{\sigma(k,i):1\le k \le n,i\in\mathbb{N}\}~\text{ and }~\{\tau(k,i):1\le k \le n,i\in \mathbb{N}\}\] are identically distributed, respectively, and we simply denote $\sigma := \sigma(1,1),\tau:= \tau(1,1)$.

By independence of $\mathbf{I}^p$, we have that the number of immune hosts $L:=\vert\mathcal{H}\vert\sim \text{Bin}(n,1-p)$ and a parasite starting at some $(\mathbf{0},k)$ with $1\le k \le n$ has a probability of $\frac{n-L}{n(L+1)}$ to reach $(\mathbf{0},0)$ before getting killed by an immune host, i.e.,
\[
\mathbb{P}(\sigma < \tau \vert L) = \frac{n-L}{n}\cdot\frac{1}{L+1}.
\]
This probability consists of not being on top of an immune host, which happens with probability $\frac{n-L}{n}$, and then leaving the set of vertices with susceptible hosts through $(\mathbf{0},0)$ and not one of the $L$ immune hosts, which happens with probability $\frac{1}{L+1}$. If a parasite is at $(\mathbf{0},0)$, then with probability $\frac{3}{3+n}$, it leaves $V_n(\mathbf{0})$. Otherwise, the event that the parasite revisits $(\mathbf{0},0)$ before getting killed by an immune host is given by first jumping onto a vertex with a susceptible host and then leaving the set of vertices with susceptible hosts through the vertex $(\mathbf{0},0)$. In particular, conditionally on $L$, it has probability  $\frac{n-L}{n+3}\cdot\frac{1}{L+1}$. We denote by
\[
\gamma(k,i):= \inf\{m\ge0:Y^{(\mathbf{0},k),i}_m \notin V_n(\mathbf{0})\},
\]
 the time until parasite $((\mathbf{0},k),i)$ leaves $V_n(\mathbf{0})$ (ignoring its death) and again denote $\gamma = \gamma(1,1)$.
 Noting that, conditionally on $L$, each attempt to get back to $(\mathbf{0},0)$ is independent from the previous one, we obtain the probability to leave $V_n(\mathbf{0})$ before death is given by
\[
\begin{split}
\mathbb{P}(\gamma < \tau\vert L = \ell) &= \frac{3(n-\ell)}{(\ell+1)(3+n)n}\sum_{j=0}^\infty\left(\frac{n-\ell}{(n+3)(\ell+1)}\right)^j \\
&= \frac{3(n-\ell)}{(\ell+1)(3+n)n} \cdot\frac{(n+3)(\ell+1)}{(n+4)\ell+3} = \frac{3(n-\ell)}{n((n+4)\ell+3)}
\end{split}
\]
for $\ell= 0,\dots,n$. For $\ell=1,\dots,n$ we further estimate
\[
\frac{3(n-\ell)}{n((n+4)\ell+3)} \le \frac{3}{(n+4)\ell+3} \le \frac{1}{n+4}\cdot\frac{3}{\ell+\frac{3}{n+4}} \le \frac{1}{n+4}\cdot\frac{3}{\ell}.
\]
Now using that $L\sim \text{Bin}(n,1-p)$ we compute for large enough $n$ that
\[
\begin{split}
\mathbb{P}(\gamma < \tau) &= \sum_{\ell=0}^n\binom{n}{\ell}p^{n-\ell}(1-p)^\ell\frac{3(n-\ell)}{n((n+4)\ell+3)} \\
&\le p^{n} + \frac{3}{n+4}\sum_{\ell=1}^n\binom{n}{\ell}p^{n-\ell}(1-p)^\ell\frac{1}{\ell} \\
&\le p^{n} + \frac{3}{n+4}\cdot\frac{2}{n(1-p)},
\end{split}
\]
where we used \cite[Theorem 1]{M1999} with $k=1$ to estimate the inverse binomial moment
\[
\sum_{\ell=1}^n\binom{n}{\ell}p^{n-\ell}(1-p)^\ell\frac{1}{\ell} = \frac{1}{n(1-p)}+o(1/n) \le \frac{2}{n(1-p)}.
\]
Because the parasites $((\mathbf{0},0),i)$ already start at the exit vertex, they have a higher chance to leave $V_n(\mathbf{0})$ before death, and we compute
\[
\begin{split}
\mathbb{P}(\gamma(0,i) < \tau(0,i)) &= \sum_{\ell=0}^n\binom{n}{\ell}p^{n-\ell}(1-p)^\ell\frac{3}{3+n} \sum_{j=0}^\infty\left(\frac{n-\ell}{(n+3)(\ell+1)}\right)^j \\
&= \sum_{t=0}^n\binom{n}{\ell}p^{n-\ell}(1-p)^\ell\frac{3(\ell+1)}{(n+4)\ell+3} \\
&\le p^{n} + \frac{1}{n+4}\sum_{\ell=1}^n\binom{n}{\ell}p^{n-\ell}(1-p)^\ell\frac{3(\ell+1)}{\ell}\\
&\le p^{n} + \frac{6}{n+4},
\end{split}
\]
where in the last line we estimated $\frac{l+1}{l}\le 2$ for $l=1,\dots,n$ and the remaining sum is the probability for a binomial distribution to be at least $1$ and, in particular, is less than $1$. Now $\xi_1$ is exactly given by the amount of parasites which leave $V_n(\mathbf{0})$ before death, and thus
\[
\begin{split}
\mathbb{E}[\xi_1] &= \sum_{i=1}^3\mathbb{P}(\gamma(0,i)<\sigma(0,i)) + \sum_{k=1}^n\sum_{i=1}^2\mathbb{P}(\gamma(k,i)<\sigma(k,i)) \\
&\le 3\left(p^{n} + \frac{6}{n+4}\right) + 2n\left(p^{n} + \frac{3}{n+4}\cdot\frac{2}{n(1-p)}\right) \overset{n\to\infty}{\to} 0.
\end{split}
\]
In particular, $\mathbb{E}[\xi_1] \le 1$ for $n$ large enough, and thus the Bienaymé-Galton-Watson branching process $(\xi_m)_{m\ge1}$ and, consequently, the SIMI die out almost surely. Hence, we must have $p_c(G^{(n)},3) \ge p$ for any $p\in (0,1)$ and $n$ large enough and conclude the proof.
\end{proof}

\begin{proof}[Proof of Theorem \ref{thm:nonmon_graph}]
 Take $G= \mathbb{T}_3$ and note that by Theorem \ref{thm:trivpos2} and Theorem \ref{thm:Tdasympt} there exists some $d_0 \ge 3$ such that
 \[
 p_c(\mathbb{T}_3,3)\ge \frac{1}{2} > p_c(\mathbb{T}_{d_0},3).
 \]
 Also, by Theorem \ref{thm:Tdphase} we have $p_c(\mathbb{T}_3,3)\in(0,1)$, and thus by Proposition \ref{prop:compgraph} there is some $n_0\in\N$ such that
 \[
 p_c(G^{(n_0)},3) > p_c(\mathbb{T}_3,3).
 \]
 Setting $G_2 = \mathbb{T}_{d_0}$ and $G_1 = G^{(n_0)}$ shows the claimed non-monotonic behaviour of the critical value. To show $p_c(G^{(n_0)},3)<1$, we couple the SIMI on $G^{(n_0)}$ with the SIMI on $\mathbb{T}_3$ by having a host at $x\in\mathbb{T}_3$ be susceptible in the SIMI on $\mathbb{T}_3$ if and only if every host inside $V_{n_0}(x)$ is susceptible in the SIMI on $G^{(n_0)}$. This yields $1 > p_c(\mathbb{T}_3,3)^{\frac{1}{n_0+1}} \ge p_c(G^{(n_0)},3)$ and concludes the proof.
\end{proof}

\subsection{Proof on recurrence}
\begin{proof}[Proof of Theorem \ref{thm:norecphase}]
 Again, we follow an approach similar to \cite[Theorem 1.9]{A2002a}. We will show that, with probability $1$, there are only finitely many vertices $\mathcal{H}\subset V$, such that the parasites that will be produced if $v\in\mathcal{H}$ is infected reach $\mathbf{0}$. Clearly, using Lemma \ref{Lem:survdom}, we obtain that replacing the offspring distribution $A$ by $\widetilde{A}$, which is distributed as $A$ conditioned to be at least $1$, will increase the recurrence probability; hence $\vartheta(G,p,A) \le \vartheta(G,p,\widetilde{A})$. For the SIMI with offspring distributed as $\widetilde{A}$, we can then make use of the variables defined in \eqref{eq:perc_coupl} to study the behaviour of the model. We fix some $p\in[0,1)$ and for $x\in V\setminus\{\mathbf{0}\}$ set
  \[
  \mathcal{R}_x^{p} := \bigcup_{i=1}^{\widetilde{A}_x-1}\mathcal{V}^p_{x,i}(\{x\}),
  \]
  the vertices that will be reached by some parasite that was activated at $x$. Also, we define
  \[
  \mathcal{R}_\mathbf{0}^{p} := \bigcup_{i=1}^{\widetilde{A}_\mathbf{0}}\mathcal{V}^p_{\mathbf{0},i}(\{\mathbf{0}\})
  \]the set of vertices visited by the initially activated parasites. Because $G$ is quasi-vertex-transitive, there is a finite partition $V_1,V_2,\dots,V_k\subset V$ such that for any $1\le\ell\le k$ and $x,y\in V_\ell$ there is a graph automorphism $\phi\in \text{Aut}(G)$ with $\phi(x) = y$. In particular, considering the worst orbit to get out of, this implies that there is some constant $C> 0$ such that for any $x\in V$ we have $\mathbb{P}(\mathbf{0}\in\mathcal{R}_x^{p})\le C \mathbb{P}(x\in\mathcal{R}_\mathbf{0}^{p})$.  For every $p\in[0,1)$ we calculate
  \[
  \begin{split}
    \sum_{x\neq \mathbf{0}}\mathbb{P}(\mathbf{0}\in\mathcal{R}_x^{p}) &\le \sum_{x\neq\mathbf{0}}C\mathbb{P}(x\in\mathcal{R}_\mathbf{0}^{p}) = C\mathbb{E}\left[\sum_{x\neq\mathbf{0}}\mathbf{1}_{x\in \mathcal{R}^p_{\mathbf{0}}}\right] =\mathbb{E}\left[\vert\mathcal{R}_\mathbf{0}^{p}\setminus\{\mathbf{0}\}\vert\right]\\
    &=C\mathbb{E}\left[\left\vert\bigcup_{i=1}^{\widetilde{A}_\mathbf{0}}\mathcal{V}_{\mathbf{0},i}^p(\{\mathbf{0}\})\setminus\{\mathbf{0}\}\right\vert\right] \le C \mathbb{E}\left[\sum_{i=1}^{\widetilde{A}_\mathbf{0}}\left\vert\mathcal{V}_{\mathbf{0},i}^p(\{\mathbf{0}\})\setminus\{\mathbf{0}\}\right\vert\right] \\ &= C \mathbb{E}[\widetilde{A}]\mathbb{E}\left[\left\vert\mathcal{V}_{\mathbf{0},1}^p(\{\mathbf{0}\})\right\vert\right] < \infty,
\end{split}
\]
where in the last equality we used the independence of $\widetilde{A}_\mathbf{0}$ from the identically distributed collection $\{\vert\mathcal{V}^p_{\mathbf{0},i}(\{\mathbf{0}\})\vert:i\in\N\}$ and finally used that $\vert\mathcal{V}^p_{\mathbf{0},1}(\{\mathbf{0}\})\vert$ is geometrically distributed with parameter $1-p > 0$ and, in particular, has a finite expectation. By the Borel-Cantelli Lemma there are almost surely only finitely many $x\neq \mathbf{0}$ such that $\mathbf{0}\in\mathcal{R}_x^{p}$. Hence, $\mathbf{0}$ is almost surely only finitely often visited by parasites.
\end{proof}

\section{Some open questions on the SIMI}
\subsection{Monotonicity of \texorpdfstring{$p_c$}{pc} as a function of \texorpdfstring{$A$}{A} under weaker stochastic orders}~

Theorem \ref{thm:nonmon_graph} establishes a non-monotonicity of $p_c$ as a function of the underlying graph.
However, the monotonicity of $p_c$ as a function of the given offspring distribution is also of interest.
A simple answer is obtained if $A^\prime$ stochastically dominates $A$. In this case, we can couple $A_x\le A_x^\prime$ almost surely for $x\in V$ and, thus, invoking a similar argument as Lemma \ref{Lem:survdom}, shows that we have $p_c(G,A) \le p_c(G,A^\prime)$.
However, many interesting cases for offspring distributions are not related in such a strong way, e.g., a deterministic $A=k$ and a Poisson-distributed $A^\prime \sim \text{Pois}(\mu)$ with $\mu > k$.
In \cite{J2018}, similar questions on the frog model are investigated by relating the offspring distributions by a weaker stochastic order. Specifically, the increasing concave order $\mathbb{E}[f(A)] \le \mathbb{E}[f(A^\prime)]$ for all bounded, increasing and concave $f:[0,\infty] \to \mathbb{R}$ and the probability generating function order $\mathbb{E}[t^A] \ge \mathbb{E}[t^{A^\prime}]$ for all $t\in (0,1)$. Their results hold in a quite general framework. However, they do assume the paths of frogs to be independent. In the SIMI this is no longer the case, as parasites can die at the same host, and thus, the paths are only conditionally independent given $\mathbf{I}^p$. Still, \textbf{we conjecture similar results to also hold for the SIMI}, but to prove them, we need to either adapt the approach of \cite{J2018} to apply also for non-independent paths or, to directly apply their results, we need to show our results in a quenched setting, i.e., that for $p$ large enough, there is some event $B\in \sigma(\mathbf{U})$ with positive probability such that for every realisation $\omega\in B$, the SIMI with deterministic immunities given through $\mathbf{I}^p(\omega)$ and random offspring $\mathbf{A}$ and parasite paths $\mathbf{Y}$ has a positive probability to survive.
Comparison results as in \cite{J2018} would also yield the improvement of establishing a phase transition for a broader spectrum of offspring distributions $A$ with $\mathbb{P}(A=0) > 0$ (cf. Theorem \ref{thm:crit_genA_Z}, Theorem \ref{thm:crit_genA_T}).

~
\subsection{Conditions on monotonicity of \texorpdfstring{$p_c$}{pc} as a function of the graph}~

Considering the non-monotonicity of $p_c$ established in Theorem \ref{thm:nonmon_graph} on one hand and the conver\-gence $p_c(\mathbb{T}_d,A)\to 1/\mathbb{E}[A]$ established in Theorem \ref{thm:Tdasympt} suggesting some kind of monotonicity on the other hand, a natural question arises: \textbf{Find conditions for any graphs $G_1,G_2$ that establish an inequality of critical values $p_c(G_1,A) \ge p_c(G_2,A)$}. Results in this direction would also broaden the class of graphs for which we can establish a phase transition. For example, it is clear that for any tree $G$ where every vertex has degree at least $3$, a similar proof as for Theorem \ref{thm:Tdphase} establishes $p_c(G,A) < 1$. However, for any graph $G$ that in particular contains loops, the existence of a phase transition of the SIMI on $G$ seems to require arguments specifically designed to fit the graph. In \cite{F2004} a similar question is already asked for the frog model with death and is, to our knowledge, still unanswered.

~
\subsection{Asymptotic of \texorpdfstring{$p_c(\mathbb{Z}^d,A)$}{pc(Zd,A)}}~

Theorem \ref{thm:Tdasympt} establishes that asymptotically the graph structure of $\mathbb{T}_d$ plays a diminishing role and, in terms of survival, the SIMI behaves more and more like a Bienaymé-Galton-Watson branching process. In \cite[Theorem 1.8]{A2002a} a similar result is also established for the frog model with death on $\Z^d$ for $d\to\infty$. \textbf{We conjecture that also $p_c(\Z^d,A) \to 1/\mathbb{E}[A]$ as $d\to\infty$}. The main problem is that, when trying to couple with a supercritical branching process, even for large (but finite) $d$, the graph structure of $\Z^d$ eventually comes into play by the existence of loops, and the coupling will break. A second coupling with a percolation would still be necessary, which, due to the mutual dependence on the locations of immune hosts, is difficult to show the supercriticality by standard arguments.

~
\subsection{Sharpness of Theorem \ref{thm:trivpos} and Theorem \ref{thm:trivpos2}}~

We were able to establish lower bounds for the critical parameter of survival harnessing, on one hand, an underlying branching process and ignoring the graph structure in Theorem \ref{thm:trivpos} and, on the other hand, relying on the underlying graph structure and ignoring the branching of parasites in Theorem \ref{thm:trivpos2}. A natural question in this setting is if one effect already determines $p_c$ if the other effect diminishes as observed in Theorem \ref{thm:Tdasympt}. This leads to the following questions:
\begin{itemize}
 \item \textbf{Does there, for any graph $G$, exist a distribution $A$ such that $p_c(G,A) = \widetilde{p}_c(G)$?}
 \begin{itemize}
 \item We note here that the question in the other direction is addressed in Theorem \ref{thm:Tdasympt}. Thus, a similar weaker result might be a first step, i.e., showing that for any graph $G$ there exists a sequence $A^{(n)}$ such that $p_c(G,A^{(n)}) \to \widetilde{p}_c(G)$.
 \end{itemize}
 \item A refinement on the first question is if an infinite expectation is already enough to match the critical parameters. \textbf{Does there exist a graph $G$ and a distribution $A$ with $\mathbb{E}[A] = \infty$, but $p_c(G,A) > \widetilde{p}_c(G)$?} Of particular interest is the case that $\widetilde{p}_c(G) = 0$.
\end{itemize}

\section*{Acknowledgments}
I would like to thank the anonymous referee for providing detailed comments and suggestions which helped me to improve this article. The author
was supported by the German Research Foundation (DFG) under Project ID: 443227151.


\bibliographystyle{plain}

\end{document}